\documentclass[11pt]{article}

\usepackage{amsmath}
\usepackage[english]{babel}
\usepackage{amsbsy}
\usepackage{amsfonts}
\usepackage{enumerate}
\usepackage{amssymb}
\usepackage{amsthm}
\usepackage[pdftex]{graphicx}
\usepackage{float}
\usepackage{framed}
\usepackage{url}
\usepackage{mathtools}
\usepackage{mathrsfs}
\usepackage{geometry}
\usepackage[T1]{fontenc}
\usepackage[utf8]{inputenc}
\usepackage{authblk}
\usepackage[numbers]{natbib}

\numberwithin{equation}{section}
\newcommand{\mbb}[1]{\mathbb{#1}}

\newcommand{\la}{\lambda}
\newcommand {\sem}[1]{\mbox{$\{{{#1}(t)}\}_{t \geq 0}$}}
\newtheorem{theorem}{\textbf{Theorem}}[section]
\newtheorem{corollary}[theorem]{\textbf{Corollary}}
\newtheorem{lemma}[theorem]{Lemma}
\newtheorem{remark}[theorem]{\textbf{Remark}}

\newtheorem{proposition}{\textbf{Proposition}}[section]

\newtheorem{example}{\textbf{Example}}[section]

\title{Analysis and Simulations of the Discrete Fragmentation Equation with Decay\footnote{The paper was presented at the BIOMATH 2017 Conference, Skukuza, 25--30.06.2017. The research and the conference attendance was supported from the funds of the DST/NRF SARChI Chair in Mathematical Models and Methods in Biosciences and Bioengineering and NRF PhD bursary (LOJ). It is an updated version of the paper \textit{Analysis and Simulations of the Discrete Fragmentation Equation with Decay}, M2AS, doi.org/10.1002/mma.4666  }}
\author[1]{J. Banasiak}
\author[2]{L.O. Joel}
\author[3]{S. Shindin}
 \affil[1]{Department of Mathematics and Applied Mathematics\\
 	University of Pretoria\\
 	Pretoria, South Africa \& Institute of Mathematics, Technical University of \L\'{o}d\'{z}, \L\'{o}d\'{z}, Poland\\e-mail:jacek.banasiak@up.ac.za}\date{}
\affil[2,3]{School of Mathematics, Statistics and Computer Science\\
           University of Kwazulu-Natal\\
           Westville Campus, Durban\\
           South Africa}\date{}

\setlength{\topmargin}{-0.2in}
\setlength{\textwidth}{6in}
\setlength{\textheight}{8.5in}
\setlength{\oddsidemargin}{0.25in}
\setlength{\evensidemargin}{0.25in}
\raggedbottom
\newcommand{\mb}[1]{\boldsymbol{#1}}

\begin{document}

\maketitle

\pagestyle{myheadings}
\markboth{J. Banasiak, L.O. Joel and S. Shindin}{Discrete Decay-Fragmentation Equation}
\thispagestyle{empty}

\begin{abstract}
\noindent
Fragmentation--coagulation processes, in which aggregates can break up or get together,  often occur together with decay processes in which the components can be removed from the aggregates by a chemical reaction, evaporation, dissolution, or death. In this paper we consider the discrete decay--fragmentation equation and prove the existence and uniqueness of physically meaningful solutions to this equation using the theory of semigroups of operators. In particular, we find conditions under which the solution semigroup is analytic, compact and has the asynchronous exponential growth property.  The theoretical analysis is illustrated by a number of numerical simulations.
\end{abstract}

\noindent
\textbf{2000 MSC:} 34G10, 35B40, 35P05, 47D06, 45K05, 80A30.\\
\textbf{Keywords:} Discrete fragmentation, death process, $C_0$-Semigroups, long term behaviour, asynchronous exponential growth, spectral gap, numerical simulations.

\section{Introduction}

Fragmentation--coagulation processes, in which we observe breaking up of  clusters of particles into smaller pieces or, conversely, creation of bigger clusters by an aggregation of smaller pieces, occur in many areas of  science and engineering, where they describe polymerization and depolymerization, droplets formation and their breakup, grinding of rocks, formation of animal groups, or phytoplankton aggregates, \cite{Ziff1985, Ziff1980, Ziff1991, Banasiak2006, jackson1990model, Oku}. In many cases, fragmentation and coagulation are accompanied by other processes such as growth or decay of clusters due to chemical reactions, surface deposition from the solute or, conversely, dissolution and evaporation, or birth and death of cells forming the cluster, see e.g. \cite{Ackleh1997, Cai1991, Huang1991, Banasiak2006, Banasiak2003a}. Another process affecting the concentration of clusters is their sinking, or sedimentation, \cite{Ackleh1997, jackson1990model}.

There are two main ways  of modelling fragmentation--coagulation processes: the discrete one, in which we assume that each cluster is composed of a finite number of identical indivisible units called monomers, \cite{Smoluchowski1916}, and the continuous one, where it is assumed that the size of the particles constituting the cluster can be an arbitrary positive number $x\in \mathbb R$, \cite{Muller1928}. Consequently, the latter case is modelled by an integro--differential equation for the density of size $x$ clusters, while in the former we deal with an infinite system of ordinary differential equations for the densities of the clusters of size $i \in \mathbb N$, also called $i$-mers. Similarly, the growth/decay process is modelled by a first order (transport) differential operator in $x$, \cite{Cai1991, Huang1991, Banasiak2006}, in the continuous case and by a difference operator, as in the birth-and-death equation, in the discrete case.

We assume that the mass of the monomer is normalized to 1 and thus the term \textit{size} is used interchangeably with \textit{mass}.

In this paper, we shall focus on the discrete fragmentation model with death and sedimentation, given by the system
 \begin{equation}
\begin{aligned}
\label{gf1b}
&\frac{d f_{i}}{d t} = r_{i+1}f_{i+1} - r_{i}f_{i} - d_{i}f_{i} -  a_{i}f_{i} + \sum_{j= i+1}^{\infty} a_{j} b_{i,j} f_{j}, \quad i \geq 1, \\
&f_{i}(0) = \mathring f_{i}, \quad i \geq 1,
\end{aligned}
\end{equation}
where $\mb f = (f_i)_{i=1}^\infty$ gives the numbers $f_i$ of $i$-mers, $i \geq 1$, $ r_{i} $ and $ d_{i} $ represent, respectively, the decay and sedimentation coefficients, $ r_{i} > 0 $ and $ d_{i} \geq 0 $. The fragmentation rate is given by $ a_{i}, $ while $ b_{i,j} $ is the average number of $i$-mers produced after the breakup of a $j$-mer, with $ j \ge i$. The difference operator, $\mb f \rightarrow (r_{i+1}f_{i+1} - r_{i}f_{i})_{i=1}^\infty$, gives the rate of change of the number of $i$-mers due to the decay/death process (for instance, assuming that in an aggregate of cells in a short period of time only one monomer may die, the number of $i$-mers increases due to the death of cells in the  $i+1$-mers, which then become $i$-mers,  and decreases due to the death of cells in size $i$-mers that then move to the $i-1$ class). Setting $r_{i} = 0$ and $d_{i} = 0 $, we arrive at the classical mass-conserving fragmentation equation.

Naturally, the clusters can only fragment into smaller pieces. Hence, we must have
\begin{equation}
\label{eq11}
 a_{1} = 0, \qquad  b_{i, j} = 0,  \quad  i \geq  j.
\end{equation}
We also assume that all clusters that are not monomers undergo fragmentation; that is,  $a_{i} > 0$ for $i \ge 2$. Since the fragmentation process only consists in the rearrangement of the total mass into clusters, it must be conservative and for this we require
\begin{equation}
\label{equ12}
\sum_{i=1}^{j-1} i b_{i,j} = j, \quad j \geq 2.
\end{equation}
The above equation expresses the fact that the masses of all particles resulting from a break-up of a cluster of mass $j$ must add up to $j$.  Note that, the total mass of the ensemble at time $t$ is given by
$$M(t) = \sum_{i=1}^{\infty} if_{i}
$$
and, in general, for the system \eqref{gf1b} is not conserved.

\begin{remark} We observe that it is possible to include the terms $r_{i+1}u_{i+1}$ into the gain term of \eqref{gf1b} by defining new coefficients $\tilde b_{i,i+1} = r_{n+1} + b_{i,i+1}$. In this way we obtain a pure fragmentation system that, however, is not mass conservative. Such systems were considered in  \cite{Cai1991, Smith2012d}. While mathematically they are equivalent to \eqref{gf1b}, physically they describe different models as in \eqref{gf1b} the death process is independent of fragmentation and in \cite{Cai1991} the mass loss is caused by the so-called explosive fragmentation. Also, the study of death--sedimentation process is of independent interest. \end{remark}

While the continuous form of \eqref{gf1b} has been well studied, both from theoretical, \cite{Cai1991, Huang1991, ArBa04, BaLa03, Bana12a}, and the numerical \cite{banasiak2014pseudospectral}, points of view, the discrete form has received much less attention, see \cite{Cai1991, Smith2011, Smith2012d}, where the authors considered a nonconservative fragmentation process resulting from the so-called random bond annihilation. In this paper, as in \cite{Smith2012d}, we shall use the substochastic semigroup theory, \cite{Banasiak2006a}, to prove the solvability of \eqref{gf1b} and investigate the properties of the solutions. The main results of the paper are the derivation of the conditions under which the solution semigroup is analytic and compact and hence the analysis of its long term-behaviour. In particular, we prove that if the sedimentation rate is at least as strong as the fragmentation rate and either is stronger than the death rate, the solution semigroup satisfies a spectral gap condition and consequently it has the asynchronous exponential growth property. This result gives a partial support to the observation in e.g. \cite{Ackleh1997} that size dependent sedimentation is a major factor in  the rapid clearance of material from the surface of the ocean.

\section{Theoretical analysis of the decay-fragmentation equation}
\label{genThe}
Following the general framework developed in \cite{Banasiak2006a}, we shall apply the theory of semigroups of operators.
Denote $X_0 =\ell^1$, the space of summable sequences. The analysis will be carried out in the subspace of $X_0$,
\[
X = \Bigl\{ \mb f :=(f_n)_{n=1}^\infty :
\| \mb f\| = \sum_{n=1}^{\infty} n | f_{n} |  < \infty \Bigr\}.
\]
The norm $\|\cdot\|$ has a simple physical interpretation --- for a given distribution of clusters
$\mb f = (f_{n})_{n=1}^{\infty}$, $\|f\|$ is the total mass of the system. For $\mb x, \mb y \in X$,
we denote $\mb x\mb y = (x_ny_n)_{n=1}^\infty$. For any $Y \subseteq X$, by $Y_+$,
we denote the set of all nonnegative sequences in $Y$.

The ACP associated with \eqref{gf1b} is given by
\begin{equation}
\label{acp1}
\frac{d \mb f}{d t} =  A \mb f +  B \mb f, \quad \mb f(0) = \mathring{\mb f}.
\end{equation}
The operators $(A, D(A))$ and $(B, D(B))$ are defined as restrictions of the expressions
\begin{equation*}
\begin{aligned}
[\mbb A \mb f]_{i} &= r_{i+1}f_{i+1} - (r_i + d_i + a_i)f_i, \qquad [\mbb B \mb f]_{i} = \sum_{j= i+1}^{\infty} a_{j} b_{i,j} f_{j}, \quad i \geq 1,
\end{aligned}
\end{equation*}
where, recall, $a_1=0$, on the domains, respectively $D(A)$ and $D(B)$ to be determined below. The main tool is
the Kato-Voigt Perturbation Theorem, \cite[Corollary 5.17]{Banasiak2006a}.

\subsection{The decay semigroup}
We begin by establishing the existence of a $C_0$-semigroup generated by a suitable realization of the decay operator $A$.
Consider the diagonal/off-diagonal splitting
$\mbb A = \mbb A_0 + \mbb A_1$ and the operators $(A_0,D(A_0))$ and $(A_1,D(A_1))$ being the restrictions of $\mbb A_0$ and  $\mbb A_1,$ defined as follows
\begin{align*}
&[ A_0\mb f]_i = - \theta_i f_i, \quad i\ge 1,\quad D(A_0) = \{f \in X: \boldsymbol{\theta} \mb f \in X  \},\\
&[ A_1\mb f]_i =   r_{i+1}f_{i+1}, \quad i\ge 1,\quad D(A_1) = \{f \in X: \mb{rf} \in X  \},
\end{align*}
 where $\theta_i = r_i + a_i + d_i$. It is  easy to see that $(A_0, D(A_0))$ generates a substochastic $C_0$-semigroup  in $X$.
Furthermore, $(A_1, D(A_1))$ is positive and $D(A_0)\subset D(A_1)$, so that the Kato--Voigt perturbation
theory applies. In particular, we have
\begin{theorem}\label{alt3}
The closure of $(A_0 + A_1, D(A_0))$ generates a substochastic $C_0$-semigroup
$\{S_{\overline{A_0+A_1}}(t)\}_{t\ge 0}$ in~$X$.
\end{theorem}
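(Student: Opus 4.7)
I would apply the Kato--Voigt perturbation theorem \cite[Corollary 5.17]{Banasiak2006a} to the splitting $\mbb A = \mbb A_0 + \mbb A_1$ described above. The three conditions to verify are: (i) $(A_0,D(A_0))$ already generates a substochastic $C_0$-semigroup on $X$; (ii) $(A_1,D(A_1))$ is positive with $D(A_0)\subset D(A_1)$; and (iii) the dissipativity estimate
\[
\sum_{i=1}^{\infty} i\bigl[(A_0+A_1)\mb f\bigr]_i \le 0
\]
holds for every $\mb f\in D(A_0)_+$.

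Items (i) and (ii) are essentially by inspection. Because $A_0$ is the diagonal multiplication by $-\theta_i\le 0$, the candidate semigroup is $[S_{A_0}(t)\mb f]_i = e^{-\theta_i t}f_i$; it is positive, strongly continuous at $0$ by dominated convergence applied to the series representation of the norm, and it satisfies $\|S_{A_0}(t)\mb f\|=\sum_{i} i e^{-\theta_i t}|f_i|\le\|\mb f\|$, hence it is substochastic. For (ii), positivity of $A_1$ is clear since $r_{i+1}\ge 0$, while the index shift $j=i+1$ gives
\[
\|A_1\mb f\| = \sum_{i\ge 1} ir_{i+1}|f_{i+1}| = \sum_{j\ge 2}(j-1)r_j|f_j| \le \sum_{j\ge 1} j\theta_j|f_j| = \|A_0\mb f\|,
\]
where I used $(j-1)r_j\le jr_j\le j\theta_j$. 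This both establishes $D(A_0)\subset D(A_1)$ and confirms that the series defining $A_1\mb f$ converges absolutely on $D(A_0)$.

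The heart of the argument is (iii). For $\mb f\in D(A_0)_+$ every series below converges absolutely, which legitimises rearrangement and the shift $i\mapsto j-1$:
\begin{align*}
\sum_{i=1}^{\infty} i\bigl[(A_0+A_1)\mb f\bigr]_i
&= \sum_{i=1}^{\infty} ir_{i+1}f_{i+1} - \sum_{i=1}^{\infty} i(r_i+a_i+d_i)f_i\\
&= \sum_{j=2}^{\infty} (j-1)r_jf_j - \sum_{j=1}^{\infty} jr_jf_j - \sum_{i=1}^{\infty} i(a_i+d_i)f_i\\
&= -\sum_{j=1}^{\infty} r_jf_j - \sum_{i=1}^{\infty} i(a_i+d_i)f_i \;\le\; 0.
\end{align*}
This is precisely the inequality required in \cite[Corollary 5.17]{Banasiak2006a}, which then delivers a substochastic $C_0$-semigroup $\{S_{\overline{A_0+A_1}}(t)\}_{t\ge 0}$ generated by the closure of $(A_0+A_1,D(A_0))$. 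The only place demanding genuine attention is the index-shift bookkeeping in (iii), but since it reduces to a clean accountancy of decay losses versus gains, no serious obstacle is expected beyond making those summation manipulations rigorous, which is automatic on the domain $D(A_0)$.
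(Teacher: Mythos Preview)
Your verification of hypotheses (i)--(iii) is correct, and the summation-by-parts computation in (iii) matches what the paper does. However, there is a genuine gap in the last step: \cite[Corollary 5.17]{Banasiak2006a} does not directly assert that the \emph{closure} of $(A_0+A_1,D(A_0))$ generates. The Kato--Voigt theorem only yields a smallest substochastic semigroup whose generator $(K,D(K))$ is \emph{some} extension of $(A_0+A_1,D(A_0))$; in general this extension may be strictly larger than the closure, and identifying $K=\overline{A_0+A_1}$ is a separate honesty-type statement requiring its own argument.

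The paper closes this gap as follows. Using that $K\subset\mathbb{A}_0+\mathbb{A}_1$ on $D(K)$ (\cite[Theorem 6.20]{Banasiak2006a}), one computes for $\mb f\in D(K)_+$ the truncated sum
\[
\sum_{n=1}^{N} n[(\mathbb{A}_0+\mathbb{A}_1)\mb f]_n
= -\sum_{n=1}^{N} n\Bigl(a_n+d_n+\tfrac{r_n}{n}\Bigr)f_n + Nr_{N+1}f_{N+1},
\]
and observes that the boundary term $Nr_{N+1}f_{N+1}\ge 0$ survives in the limit, giving $\sum_n n[(\mathbb{A}_0+\mathbb{A}_1)\mb f]_n \ge -c(\mb f)$. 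This is precisely the reverse inequality required by \cite[Theorem 6.22]{Banasiak2006a} to conclude $K=\overline{A_0+A_1}$. Your proof stops one step short of this; the missing ingredient is not hard, but it is not automatic and must be supplied explicitly.
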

\begin{proof}
For $ \mb f \in D(A_0)_+$, we have
\begin{align*}
\sum_{n=1}^{\infty} n [(A_0 + A_1) \mb f]_{n} &= \sum_{n=1}^{\infty} n(r_{n+1}f_{n+1} -  r_n f_n ) - \sum_{n=1}^{\infty} n (a_n+d_n) f_n   \\
&= - \sum_{n=1}^{\infty} n\Bigl(a_n+d_n + \frac{r_n}{n}\Bigr) f_n =:-\sum_{n=1}^\infty n c_n f_n=: -c(\mb f) \le 0.
\end{align*}
Hence, there exists a smallest substochastic semigroup $\{S_{A}(t)\}_{t \geq 0}$ generated by an extension
$(K, D(K))$ of $A_{0} + A_1$. In fact $(K, D(K))$ is the closure of $A_0+A_1$. To see this, consider
the maximal extensions $\mathbb{A}_0$ and $\mathbb{A}_1$ of $A_0$ and $A_1$, respectively. Since $K \subset \mathbb{A}_0+\mathbb{A}_1$, see \cite[Theorem 6.20]{Banasiak2006a}, for
$ \mb f \in D(K)_+ $, we have
\begin{eqnarray}
\sum_{n=1}^{\infty} n [(\mathbb{A}_{0} + \mathbb{A}_{1})\mb  f]_{n} &=& \lim_{N \to \infty} \sum_{n=1}^{N} n \bigg (  - d_nf_n - a_nf_n - r_nf_n + r_{n+1}f_{n+1} \bigg ) \nonumber\\
&=& \lim_{N \to \infty} \bigg (- \sum_{n=1}^{N} n\Bigl(a_n+d_n + \frac{r_n}{n}\Bigr) f_n
 + Nr_{N+1}f_{N+1} \bigg ) \nonumber\\
&=& -\sum_{n=1}^\infty n c_n f_n +  \lim_{N \to \infty} Nr_{N+1}f_{N+1}  \ge - c(\mb f).
\label{KV1}
\end{eqnarray}
Hence $K = \overline{A_{0} + A_{1}}$ and  $\sum_{n=1}^\infty n [K\mb f]_n = -c(\mb f)$, \cite[Theorem 6.22]{Banasiak2006a}.
\end{proof}
In view of \cite[Theorem 2.3.4.]{Wong},  Theorem~\ref{alt3} implies that $\{S_K(t)\}_{t\ge 0}$ is unique in the sense this is the only semigroup whose generator is extensions of $(A_0+A_1, D(A_0))$.

The characterization  $K=\overline{A_0+A_1}$ is sharp but not easy to use. An alternative characterization of $K$ follows directly from \cite[Lemma 3.50 \& Proposition 3.52]{Banasiak2006a}.
Let $K_{\max} = \mathbb{A}_0+\mathbb{A}_1$ on
$$
D(K_{\max}) = \{\mb f \in X;\; \mathbb{A}_0\mb f+\mathbb{A}_1\mb f \in X\}.
$$
\begin{proposition}\label{maxdom}
$$
D(K) = D(K_{\max})
$$
if and only if there is no $\mb f \in X$ satisfying
\begin{equation}
\mathbb{A}_0\mb f+\mathbb{A}_1\mb f = \lambda \mb f, \quad \lambda>0.
\label{eigeq1}
\end{equation}
\end{proposition}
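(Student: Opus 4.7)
The plan is to deduce this from the fact that $K$ already generates a substochastic semigroup, hence $(0,\infty)\subset \rho(K)$. The inclusion $K\subset K_{\max}$ is immediate from the definitions: any $\mb f\in D(K)$ satisfies $\mathbb{A}_0\mb f+\mathbb{A}_1\mb f = K\mb f \in X$, so $\mb f\in D(K_{\max})$, and the actions agree on $D(K)$. Thus the equality $D(K)=D(K_{\max})$ is purely a question of whether $K_{\max}$ strictly enlarges~$K$.

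For the \emph{only if} direction, suppose $D(K)=D(K_{\max})$ and let $\mb f\in X$ satisfy $\mathbb{A}_0\mb f+\mathbb{A}_1\mb f=\lambda \mb f$ for some $\lambda>0$. Then $\mb f\in D(K_{\max})=D(K)$ and $(\lambda I-K)\mb f=0$. Because $\lambda\in\rho(K)$, the operator $\lambda I-K$ is injective, so $\mb f=0$, giving the required non-existence of a nontrivial eigenvector.

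For the \emph{if} direction, assume that the eigenequation \eqref{eigeq1} has no nonzero solution in $X$ for any $\lambda>0$. Pick any $\mb f\in D(K_{\max})$ and set $\mb g=(\lambda I-K_{\max})\mb f\in X$. Since $\lambda\in\rho(K)$, there exists $\mb h=R(\lambda,K)\mb g\in D(K)\subset D(K_{\max})$ with $(\lambda I-K)\mb h=\mb g$. The difference $\mb w:=\mb f-\mb h$ lies in $D(K_{\max})$ and satisfies
\begin{equation*}
(\lambda I-K_{\max})\mb w = (\lambda I-K_{\max})\mb f - (\lambda I-K_{\max})\mb h = \mb g - (\lambda I-K)\mb h = 0,
\end{equation*}
i.e.\ $\mathbb{A}_0\mb w+\mathbb{A}_1\mb w=\lambda \mb w$. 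By hypothesis $\mb w=0$, so $\mb f=\mb h\in D(K)$, and $D(K_{\max})\subset D(K)$.

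The only delicate point is to confirm that $(0,\infty)\subset \rho(K)$; but this is automatic for the generator of a substochastic $C_0$-semigroup, whose spectrum lies in the closed left half-plane. Everything else is a one-line manipulation of the resolvent, which is precisely the content of \cite[Lemma~3.50 \& Proposition~3.52]{Banasiak2006a} invoked in the statement; the argument above is simply specialising that general principle to the splitting $\mathbb{A}_0+\mathbb{A}_1$ at hand.
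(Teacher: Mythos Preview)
Your argument is correct. Both directions are handled cleanly: the ``only if'' part uses only that $\lambda>0$ lies in $\rho(K)$, and the ``if'' part is the standard ``subtract the resolvent'' trick showing that any $\mb f\in D(K_{\max})$ differs from an element of $D(K)$ by an eigenvector of $K_{\max}$, which must vanish by hypothesis.

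The paper's proof takes a slightly different route. Rather than arguing directly with the resolvent of $K$, it first solves the eigenequation \eqref{eigeq1} explicitly by the recursion \eqref{2.4}--\eqref{2.4sol}, thereby establishing that $\operatorname{Ker}(\lambda I-K_{\max})$ is at most one-dimensional. This dimension bound is what allows it to invoke \cite[Proposition~3.52]{Banasiak2006a} to conclude that $K_{\max}$ is closed, and then \cite[Lemma~3.50]{Banasiak2006a} to obtain the direct-sum decomposition $D(K_{\max})=D(K)\oplus\operatorname{Ker}(\lambda I-K_{\max})$, from which the equivalence is immediate. Your approach is more elementary in that it needs neither the closedness of $K_{\max}$ nor any information on the size of the eigenspace; on the other hand, the paper's computation of the explicit eigenvector \eqref{2.4sol} is not wasted effort, since it is reused later (e.g.\ in the discussion around \eqref{eq112a}--\eqref{eq112b}) to give concrete criteria for when the eigenequation does or does not admit a solution in $X$.
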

\begin{proof}
Equation (\ref{eigeq1}) takes the form
\begin{equation}
(\lambda + \theta_i )f_{i} -  r_{i+1}f_{i+1}  = 0, \quad i \geq 1.
\label{2.4}
\end{equation}
Formally, it can be solved by fixing $f_{\la,1}=(\la+\theta_1)^{-1} >0$ and recursively calculating
\begin{equation}
f_{\la, n} = \frac{f_{\la,1}}{\la+\theta_n}\prod\limits_{i=2}^n\frac{\la + \theta_i}{r_i}, \quad n\geq 2.
\label{2.4sol}
\end{equation}
Hence the solution space is at most 1 dimensional. Thus, by \cite[Proposition 3.52]{Banasiak2006a}, $K_{\max}$ is closed and \cite[Lemma 3.50]{Banasiak2006a} yields the decomposition
$$
D(K_{\max}) = D(K)\oplus Ker (\lambda I -K_{\max}).
$$
Hence the proposition follows.
\end{proof}
We will analyse (\ref{eigeq1}) later. Here we list the properties of $K$ that can be deduced from Theorem \ref{alt3}.
\begin{corollary}\label{cor2.2}
If $\mb f \in D(K)_+$, then
\begin{equation}
\sum\limits_{n=1}^\infty na_n f_n <\infty, \quad \sum\limits_{n=1}^\infty nd_n f_n <\infty, \quad \sum\limits_{n=1}^\infty r_n f_n <\infty
\label{finsums}
\end{equation}
and
\begin{equation}
\lim\limits_{n\to \infty} nr_nf_n =0.
\label{limcon}
\end{equation}
\end{corollary}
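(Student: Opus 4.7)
The plan is to extract both conclusions directly from the mass-balance identity established in the proof of Theorem~\ref{alt3}. Recall that for any $\mb f \in D(K)_+$ that proof yields
$$\sum_{n=1}^\infty n[K\mb f]_n = -c(\mb f), \qquad c(\mb f) = \sum_{n=1}^\infty \bigl(na_n + nd_n + r_n\bigr) f_n.$$

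First, I would use this identity to deduce \eqref{finsums}. Since $\mb f \in D(K)$ implies $K\mb f \in X$, the series $\sum_{n=1}^\infty n[K\mb f]_n$ converges absolutely and is therefore a finite signed number. The identity then forces $c(\mb f) < \infty$. Because each summand in $c(\mb f)$ is nonnegative (as $\mb f \in X_+$ and $a_n, d_n, r_n \geq 0$), the three partial series in \eqref{finsums} each converge.

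Next, to obtain \eqref{limcon}, I would revisit the intermediate step from the proof of Theorem~\ref{alt3}, namely the purely algebraic identity
$$\sum_{n=1}^N n[K\mb f]_n = -\sum_{n=1}^N \bigl(na_n + nd_n + r_n\bigr) f_n + Nr_{N+1}f_{N+1},$$
which follows by a single Abel/summation-by-parts rearrangement from the componentwise formula $[K\mb f]_i = r_{i+1}f_{i+1} - (r_i+a_i+d_i)f_i$. Solving for the boundary term,
$$Nr_{N+1}f_{N+1} = \sum_{n=1}^N n[K\mb f]_n + \sum_{n=1}^N \bigl(na_n + nd_n + r_n\bigr) f_n,$$
and as $N \to \infty$ the two sums on the right converge to $-c(\mb f)$ and $c(\mb f)$ respectively (by Theorem~\ref{alt3} and by the previous paragraph), so the left-hand side tends to zero.

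The main technical point --- and there is really no obstacle beyond careful bookkeeping --- is observing that $\sum_n n[K\mb f]_n$ is a \emph{signed} series which one is nevertheless free to split into the two nonnegative pieces above thanks to absolute convergence $\sum_n n|[K\mb f]_n| = \|K\mb f\| < \infty$. Crucially, the \textbf{equality} in Theorem~\ref{alt3}, rather than the mere inequality one would have for a generic element of $D(K_{\max})$, is exactly what is needed to pin the limit of $Nr_{N+1}f_{N+1}$ down to zero.
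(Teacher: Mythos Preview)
Your argument is correct and, in fact, more self-contained than the paper's own proof. The paper does not work directly with the partial-sum identity; instead it invokes two external results from \cite{Banasiak2006a}: Theorem~6.8 for the finiteness of $c(\mb f)$ on $D(K)_+$ (via continuous extension in the graph norm and monotonic limits), and Theorem~6.13 for the vanishing of the boundary term $nr_nf_n$. Your route bypasses both citations by exploiting the equality $\sum_n n[K\mb f]_n = -c(\mb f)$ already recorded at the end of Theorem~\ref{alt3} (itself a consequence of honesty via Theorem~6.22 of the same reference), then reading off \eqref{finsums} from the finiteness of $\|K\mb f\|$ and recovering \eqref{limcon} from the explicit Abel-summation remainder. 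The only point worth noting is that you are implicitly using $K\subset \mathbb A_0+\mathbb A_1$ (so that the componentwise formula for $[K\mb f]_i$ is valid on all of $D(K)$), which the paper does establish in the proof of Theorem~\ref{alt3}. What you gain is transparency: the mechanism by which honesty forces the boundary term to vanish is made completely explicit, whereas the paper's proof defers this to the cited monograph.
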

\begin{proof}
Properties (\ref{finsums}) follow from the fact that the functional $c$ extends to $D(K)$ by continuity (in the norm of $D(K)$) and to $D(K)_+$ by monotonic limits, \cite[Theorem 6.8]{Banasiak2006a}, as in the proof of \cite[Theorem 2.1]{Banasiak2012}. Eqn. (\ref{limcon}) follows from \cite[Theorem 6.13]{Banasiak2006a}. \end{proof}
For further consideration, we need one more result that is an obvious consequence of (\ref{finsums}) and the definition of $D(A_0)$.
\begin{corollary}
If there is $C>0$ such that
\begin{equation}
d_n+a_n \geq C r_n, \quad n \in \mbb N,
\label{condanal}
\end{equation}
then $D(K) = D(A_0)$ and $K=A_0+A_1$.
\label{coranal}
\end{corollary}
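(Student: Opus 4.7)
The plan is to show that, under the hypothesis $d_n+a_n\ge Cr_n$, the operator $A_1$ is $A_0$-bounded with relative bound strictly less than one. A standard perturbation argument then yields that $(A_0+A_1,D(A_0))$ is already closed, so its closure $K$ must coincide with $A_0+A_1$ on the domain $D(A_0)$, giving both assertions of the corollary at once.

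The key computation is as follows. The hypothesis rewrites as $\theta_n = r_n+a_n+d_n \ge (1+C)r_n$, i.e.\ $r_n \le \theta_n/(1+C)$. Using the index shift $m=n+1$, for any $\mb f\in D(A_0)$ we have
\begin{equation*}
\|A_1\mb f\| = \sum_{n=1}^{\infty} n\, r_{n+1}|f_{n+1}|
 = \sum_{m=2}^{\infty} (m-1)\, r_m |f_m|
 \le \sum_{m=1}^{\infty} m\, r_m |f_m|
 \le \frac{1}{1+C}\,\|A_0\mb f\|,
\end{equation*}
which is the desired relative bound with constant $(1+C)^{-1}<1$. In particular $D(A_0)\subseteq D(A_1)$, so the sum $A_0+A_1$ is well-defined on $D(A_0)$.

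The inclusion $D(A_0)\subseteq D(K)$ is immediate, since by Theorem~\ref{alt3} the generator $K$ is the closure of $A_0+A_1$ on $D(A_0)$. For the reverse inclusion, let $\mb f\in D(K)$ and pick an approximating sequence $\mb f^{(k)}\in D(A_0)$ with $\mb f^{(k)}\to\mb f$ and $(A_0+A_1)\mb f^{(k)}\to K\mb f$ in $X$. Applying the relative bound to $\mb f^{(k)}$ gives
\begin{equation*}
\bigl(1-(1+C)^{-1}\bigr)\|A_0\mb f^{(k)}\| \le \|(A_0+A_1)\mb f^{(k)}\|,
\end{equation*}
so the sequence $\{A_0\mb f^{(k)}\}$ is bounded in $X$. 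Since $\mb f^{(k)}\to\mb f$ in $X$ implies componentwise convergence $f_n^{(k)}\to f_n$, Fatou's lemma yields $\|A_0\mb f\| \le \liminf_k \|A_0\mb f^{(k)}\| <\infty$, hence $\boldsymbol{\theta}\mb f\in X$, i.e.\ $\mb f\in D(A_0)$. Together with $K\mb f = A_0\mb f+A_1\mb f$ (obtained by taking the limit term by term, which is legitimate now that all operators are continuous on the graph norm of $A_0$), this gives $K=A_0+A_1$ on $D(K)=D(A_0)$.

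The only nontrivial ingredient is the relative-bound estimate; this is precisely where the assumption $d_n+a_n\ge Cr_n$ is used, and it alone carries the weight of the argument. The remaining closure manipulation is a routine application of the closed-graph perturbation framework and presents no real obstacle.
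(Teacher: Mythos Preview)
Your proof is correct, but it follows a different line of argument from the paper's. The paper deduces the corollary directly from Corollary~\ref{cor2.2}: for $\mb f\in D(K)_+$ the finiteness conditions \eqref{finsums} give $\sum n(a_n+d_n)f_n<\infty$, and the hypothesis $d_n+a_n\ge Cr_n$ then forces $\sum nr_nf_n<\infty$, whence $\boldsymbol\theta\mb f\in X$ and $\mb f\in D(A_0)$. Thus the paper's route relies on the substochastic machinery already set up (extension of the functional $c$ to $D(K)$), while you bypass this entirely with a direct relative-bound estimate $\|A_1\mb f\|\le (1+C)^{-1}\|A_0\mb f\|$ and a closure argument. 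Your approach is more elementary and self-contained; the paper's is shorter given the surrounding framework. One minor remark: your Fatou step only gives boundedness of $\{A_0\mb f^{(k)}\}$, not convergence, so the parenthetical justification ``all operators are continuous on the graph norm of $A_0$'' is slightly misplaced---but you do not actually need it, since once $\mb f\in D(A_0)$ is established, $K\mb f=(A_0+A_1)\mb f$ follows immediately from $K\supset A_0+A_1$. Alternatively, applying your inequality to differences $\mb f^{(k)}-\mb f^{(j)}$ shows $\{A_0\mb f^{(k)}\}$ is Cauchy and then closedness of $A_0$ finishes the job cleanly.
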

To provide a complete characterization of $D(K)$ in general case, we find an explicit formula for the resolvent.

\subsection{An alternative characterization of $K$}
Consider the formal resolvent equation for $\mbb A$:
\begin{equation}\label{dsg2}
(\lambda + \theta_i )u_{i} -  r_{i+1}u_{i+1}  = f_{i},
\end{equation}
where $\mb f = (f_n)_{n=1}^\infty. $
Solving \eqref{dsg2} for $u_i$,  we get the formal identity:
    \begin{equation}
  \label{dsg5}
  u_{i} = \frac{1}{\lambda + \theta_i} \sum_{n=i}^{\infty} f_n \prod_{j=i+1}^{n} \frac{r_{j}}{\lambda + \theta_{j}}, \quad  i\ge 1.
  \end{equation}
 It turns out that the operator $R_\lambda:X\to X$, defined by \eqref{dsg5}, is bounded.
\begin{lemma}\label{lem11}
For $\lambda > 0 $,  we have
$\|R_\lambda \| \leq \frac{1}{\lambda}$.
\end{lemma}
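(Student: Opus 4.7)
My plan is to first establish the bound on the positive cone $X_+$, where every quantity in \eqref{dsg5} is nonnegative and monotone limits are available, and then extend to arbitrary $\mb f\in X$ via the pointwise domination $|[R_\la\mb f]_i|\le [R_\la|\mb f|]_i$, which is immediate since every entry of the kernel appearing in \eqref{dsg5} is positive.

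For $\mb f\in X_+$ I would truncate the series in \eqref{dsg5} at level $N$: set
$$
u_i^{(N)} = \frac{1}{\la+\theta_i}\sum_{n=i}^N f_n\prod_{j=i+1}^n \frac{r_j}{\la+\theta_j},\qquad 1\le i\le N,\qquad u_i^{(N)}=0,\quad i>N.
$$
A direct check shows that $\mb u^{(N)}$ is the (nonnegative) solution of the truncation of \eqref{dsg2},
$$
(\la+\theta_i)u_i^{(N)} - r_{i+1}u_{i+1}^{(N)} = f_i,\qquad 1\le i\le N,
$$
with the artificial boundary condition $u_{N+1}^{(N)}=0$. Multiplying the $i$th equation by $i$, summing over $i$, and reindexing the transport contribution via $\sum_{i=1}^{N} i\, r_{i+1}u_{i+1}^{(N)} = \sum_{i=2}^N (i-1) r_i u_i^{(N)}$ (legitimate precisely because $u_{N+1}^{(N)}=0$), one obtains, after inserting $\theta_i=r_i+a_i+d_i$ and using $a_1=0$, the energy identity
$$
\sum_{i=1}^N i f_i \;=\; \la\sum_{i=1}^N i u_i^{(N)} + (r_1+d_1)u_1^{(N)} + \sum_{i=2}^N \bigl[r_i + i(a_i+d_i)\bigr] u_i^{(N)}.
$$
Since every term on the right is nonnegative, dropping the last two yields $\la\sum_{i=1}^N i u_i^{(N)}\le \|\mb f\|$.

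The partial sums $u_i^{(N)}$ increase monotonically in $N$ to $[R_\la\mb f]_i$, so monotone convergence gives $\|R_\la\mb f\|\le\la^{-1}\|\mb f\|$ for $\mb f\in X_+$, and the pointwise domination stated above propagates the bound to all $\mb f\in X$. The only delicate point I anticipate is the bookkeeping at the upper end of the range: the truncation $u_{N+1}^{(N)}\equiv 0$ is essential because it kills a potential boundary contribution of the form $Nr_{N+1}u_{N+1}$ which, at this stage of the development, is not yet known to vanish (this is exactly the phenomenon responsible for the inequality rather than equality in \eqref{KV1}). Once the $N$-uniform estimate is in hand, the passage to the limit is routine.
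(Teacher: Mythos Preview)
Your proof is correct and takes a genuinely different route from the paper's. The paper works directly with the explicit expression \eqref{dsg5}: after a triangle inequality and a change of order of summation it bounds the inner kernel sum $\sum_{n=1}^{i}\frac{n}{\la+\theta_n}\prod_{j=n+1}^{i}\frac{r_j}{\la+\theta_j}$ by $i/\la$ via a telescoping trick that uses $r_j\le\theta_j$ together with the identity $\frac{1}{\la+\theta_n}=\frac{1}{\la}\bigl(1-\frac{\theta_n}{\la+\theta_n}\bigr)$. Your approach is instead an energy argument on the truncated resolvent equation: multiply by the weight $i$, sum by parts, and read off the bound from the resulting dissipation identity; the artificial boundary condition $u_{N+1}^{(N)}=0$ kills the only potentially troublesome term, and you flag this correctly. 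Conceptually your method is perhaps more illuminating, since the energy identity is precisely the resolvent-level analogue of the Kato--Voigt computation in Theorem~\ref{alt3}, and the nonnegative terms you throw away, $(r_1+d_1)u_1^{(N)}+\sum_{i\ge 2}[r_i+i(a_i+d_i)]u_i^{(N)}$, already encode the estimate $\|\mb p R_\la\mb f\|\le\|\mb f\|$ that the paper proves separately in Lemma~\ref{lem11b}. The paper's argument, on the other hand, handles arbitrary $\mb f\in X$ in a single pass, without the reduction to the positive cone or the monotone-limit step.
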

\begin{proof}
Let $\mb f \in X$. We apply the triangle inequality and change the order of summation
to obtain
\begin{equation}\label{eq14}
\begin{aligned}
\| R_\lambda\mb f \| &= \sum_{n=1}^{\infty} n \Bigl| \frac{1}{\lambda + \theta_n} \sum_{i=n}^{\infty} f_i \prod_{j=n+1}^{i} \frac{r_{j}}{\lambda + \theta_{j}} \Bigr|
\le \sum_{i=1}^{\infty} |f_{i}| \sum_{n=1}^{i} \frac{n}{\lambda + \theta_n} \prod_{j=n+1}^{i} \frac{r_{j}}{\lambda + \theta_{j}}\\
&  \le \frac{1}{\lambda} \sum_{i=1}^{\infty} |f_{i}| \sum_{n=1}^{i} n \bigg ( 1- \frac{\theta_n}{\lambda + \theta_n} \bigg )  \prod_{j=n+1}^{i} \frac{\theta_{j}}{\lambda + \theta_{j}} \\
&\leq \frac{1}{\lambda} \sum_{i=1}^{\infty} |f_{i}| \bigg[ \sum_{n=1}^{i} n \prod_{j=n+1}^{i} \frac{\theta_{j}}{\lambda + \theta_{j}} - \sum_{n=1}^{i} n \prod_{j=n}^{i} \frac{\theta_{j}}{\lambda + \theta_{j}}\bigg] \\
&= \frac{1}{\lambda} \sum_{i=1}^{\infty} |f_{i}| \bigg[ \sum_{n=1}^{i} n \prod_{j=n+1}^{i} \frac{\theta_{j}}{\lambda + \theta_{j}} - \sum_{n=0}^{i-1} (n+1) \prod_{j=n+1}^{i} \frac{\theta_{j}}{\lambda + \theta_{j}}\bigg] \\
&=  \frac{1}{\lambda} \sum_{i=1}^{\infty} |f_{i}| \bigg[ i - \sum_{n=1}^{i-1} \prod_{j=n+1}^{i} \frac{\theta_{j}}{\lambda + \theta_{j}}  - \prod_{j=1}^{i} \frac{\theta_{j}}{\lambda + \theta_{j}}\bigg]
\leq  \frac{1}{\lambda} \sum_{i=1}^{\infty} i |f_{i}|
\end{aligned}
\end{equation}
and the required estimate follows.
\end{proof}
Next, we show that $R_\lambda$, $\lambda>0$, is the resolvent of $A$ if the domian $D(A)$ is appropriately
chosen. To simplify our calculations, we let $\mb p = (p_n)^\infty_{n=1}$ with $p_n = a_n+d_n$,  and
$\triangle \mb f = (f_1, (f_{n-1}-f_{n})_{n\ge 2})$.
\begin{lemma}
\label{lem11b}
Operator $R_{\lambda}$, $\lambda > 0,$ maps $X$ into the set
\begin{equation}
S = \{ \mb f \in X: \mb{pf} \in X,  \triangle (\mb{rf}) \in X, \lim_{n\to\infty} r_n f_n = 0 \}.
\label{S}
\end{equation}
\end{lemma}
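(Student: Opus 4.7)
By linearity of $R_\lambda$ and the fact that $S$ is a vector subspace of $X$, I reduce the argument to the positive cone: every $\mb f \in X$ splits as $\mb f = \mb f^+-\mb f^-$ with $\mb f^\pm \in X_+$, so it suffices to treat $\mb f \in X_+$, in which case $\mb u := R_\lambda \mb f \ge 0$. A direct check starting from \eqref{dsg5}, by isolating the $n=i$ term in the series and extracting the factor $r_{i+1}/(\lambda+\theta_{i+1})$ from the remainder, shows that $\mb u$ satisfies the resolvent identity
\[
(\lambda+\theta_i)\, u_i = f_i + r_{i+1} u_{i+1}, \qquad i \ge 1,
\]
which is the only algebraic input required in what follows.

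The condition $\lim_{n\to\infty} r_n u_n = 0$ is immediate, because every factor $r_j/(\lambda+\theta_j)\le 1$ gives $r_i u_i \le \sum_{n\ge i} f_n$, and the tail $\sum_{n\ge i} f_n$ vanishes as $i\to\infty$ since $\sum_n f_n \le \|\mb f\| < \infty$. For the remaining two conditions, the plan is to multiply the resolvent identity by $i$, sum from $1$ to $N$, use $\theta_i=r_i+p_i$, and perform Abel summation on $\sum_{i=1}^N i\,(r_{i+1}u_{i+1}-r_iu_i)$ to obtain the fundamental partial-sum identity
\[
\sum_{i=1}^N i p_i u_i \;+\; \sum_{j=1}^N r_j u_j \;+\; \lambda \sum_{i=1}^N i u_i \;=\; \sum_{i=1}^N i f_i \;+\; N r_{N+1}u_{N+1},
\]
in which every term on the left-hand side is nonnegative.

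The main obstacle is controlling the boundary term $N r_{N+1}u_{N+1}$; this is precisely the point where the weight $n$ built into the norm of $X$ is essential, since combining the earlier estimate with $N\le m$ for $m\ge N+1$ yields
\[
N r_{N+1}u_{N+1} \;\le\; N\!\!\sum_{m\ge N+1}\! f_m \;\le\; \sum_{m\ge N+1}\! m f_m \;\xrightarrow[N\to\infty]{}\; 0.
\]
Hence the right-hand side of the identity is bounded uniformly by $\|\mb f\|+o(1)$, and monotonicity of the nonnegative partial sums on the left gives $\sum_i i p_i u_i \le \|\mb f\|$, that is, $\mb{p u}\in X$. Finally, rewriting the resolvent identity as $r_{n-1}u_{n-1}-r_nu_n = f_{n-1}-\lambda u_{n-1}-p_{n-1}u_{n-1}$ for $n\ge 2$ and using the trivial bound $n\le 2(n-1)$ for $n\ge 2$ after the index shift $m=n-1$ produces
\[
\|\triangle(\mb{r u})\| \;\le\; r_1 u_1 + 2\bigl(\|\mb f\|+\lambda\|\mb u\|+\|\mb{p u}\|\bigr) \;<\; \infty,
\]
which, together with the limit $r_n u_n\to 0$ already established, places $\mb u$ in $S$.
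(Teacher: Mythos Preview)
Your argument is correct but follows a different path from the paper's. The paper establishes $\mb{p}R_\lambda\mb f \in X$ by a direct telescoping-product estimate on the explicit formula \eqref{dsg5}, mimicking the computation in Lemma~\ref{lem11}, and obtains the clean bound $\|\mb{p}R_\lambda\mb f\|\le\|\mb f\|$ without restricting to the positive cone. You instead reduce to $\mb f\in X_+$, invoke the resolvent identity, and derive the partial-sum identity by Abel summation; the crucial step is your control of the boundary term $N r_{N+1}u_{N+1}\le\sum_{m\ge N+1} m f_m\to 0$, which is exactly where the weight defining $X$ enters. Both routes then treat $\triangle(\mb{r u})$ the same way, by rewriting it via the resolvent identity in terms of $\mb f$, $\mb u$ and $\mb{p u}$. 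Your approach is more ``energy-estimate'' in flavour and has the virtue that it simultaneously yields $\mb{r u}\in X_0$ and recovers $\lambda\|\mb u\|\le\|\mb f\|$ from the same identity; the paper's approach avoids the positive-cone reduction and the boundary-term analysis altogether, working directly with absolute values.
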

\begin{proof}
Our estimates are similar to those
used in formula \eqref{eq14}:
\begin{align*}
\| \mb pR_\lambda\mb f \| &=  \sum_{n=1}^{\infty} n \Bigl| \frac{(a_n + d_n)}{\lambda + \theta_n} \sum_{i=n}^{\infty} f_i \prod_{j=n+1}^{i} \frac{r_{j}}{\lambda + \theta_{j}}  \Bigr|\leq  \sum_{i=1}^{\infty} |f_i| \sum_{n=1}^{i} \frac{n (\theta_n - r_n)}{\lambda + \theta_n} \prod_{j=n+1}^{i} \frac{r_{j}}{\lambda + \theta_{j}} \\
&= \sum_{i=1}^{\infty} |f_i| \sum_{n=1}^{i} n \bigg [ \frac{-\lambda}{\lambda + \theta_n} + 1 - \frac{r_n}{\lambda + \theta_n} \bigg ] \prod_{j=i+1}^{i} \frac{r_{j}}{\lambda + \theta_{j}}\\
& \leq \sum_{i=1}^{\infty} |f_i| \sum_{n=1}^{i} n \bigg [ 1 - \frac{r_n}{\lambda + \theta_n} \bigg ] \prod_{j=i+1}^{i} \frac{r_{j}}{\lambda + \theta_{j}} \\
&= \sum_{i=1}^{\infty} |f_i| \bigg [ i + \sum_{n=1}^{i-1} n \prod_{j=n+1}^{i} \frac{r_{j}}{\lambda + \theta_{j}} - \sum_{n=1}^{i} n \prod_{j=n}^{i} \frac{r_{j}}{\lambda + \theta_{j}} \bigg ] \\
&= \sum_{i=1}^{\infty} |f_{i}| \bigg[ i - \sum_{n=1}^{i-1} \prod_{j=n+1}^{i} \frac{r_j}{\lambda+\theta_{j}}  - \prod_{j=1}^{i} \frac{r_j}{\lambda+\theta_{j}} \bigg]
\leq \sum_{i=1}^{\infty} i |f_i| = \|\mb f\|.
\end{align*}
Next, we have to estimate $\|\triangle (\mb r R_\lambda\mb f)\|$. For this, we observe that
\begin{align*}
[\triangle (\mb r R_\lambda\mb f)]_n &= r_{n+1}[R_\lambda\mb f]_{n+1} - r_n [R_\lambda\mb f]_n\\&=
(\lambda + p_n)[R_\lambda\mb f]_n + [ r_{n+1}[R_\lambda\mb f]_{n+1} - (\lambda + \theta_n)[R_\lambda\mb f]_n ] \\
&= (\lambda + p_n)[R_\lambda\mb f]_n  +  \frac{r_{n+1}}{\lambda + \theta_{n+1}} \sum_{i=n+1}^{\infty} f_i \prod_{j=n+2}^{i} \frac{r_{j}}{\lambda + \theta_{j}}  -  \sum_{i=n}^{\infty} f_i \prod_{j=n+1}^{i} \frac{r_{j}}{\lambda + \theta_{j}}  \\
&= (\lambda + p_n)[R_\lambda\mb f]_n  +  \sum_{i=n+1}^{\infty} f_i \prod_{j=n+1}^{i} \frac{r_{j}}{\lambda + \theta_{j}}  -  \sum_{i=n}^{\infty} f_i \prod_{j=n+1}^{i} \frac{r_{j}}{\lambda + \theta_{j}}  \\
&= (\lambda + p_n)[R_\lambda\mb f]_n - f_n.
\end{align*}
Consequently,
\begin{align*}
\| \triangle( \mb{rR_\lambda\mb f}) \|  &= \| (\lambda +\mb p)R_\lambda\mb f - \mb f \|  \leq \lambda \| R_\lambda\mb f \| + \|\mb pR_\lambda\mb f \| + \| \mb f \|\le  3 \| \mb f \|.
\end{align*}
Finally,
\[
\lim_{n\to\infty} |r_n[R_\lambda\mb f]_n| \le
\lim_{n\to \infty} \frac{r_n}{\lambda+\theta_n}\sum_{i=n}^\infty
|f_i| \prod_{j=n+1}^i\frac{r_j}{\lambda+\theta_j}
\le \lim_{n\to \infty} \sum_{i=n}^\infty |f_i| = 0,
\]
and the claim is proved.
\end{proof}

\begin{lemma}\label{lem12}
For $\lambda > 0$, the operator $R_\lambda$ is the resolvent of $(A, S),$ where $A= \mbb A|_{S}$.
\end{lemma}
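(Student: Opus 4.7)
The plan is to establish the two identities $(\lambda I - A) R_\lambda \mb f = \mb f$ on $X$ and $R_\lambda (\lambda I - A) \mb u = \mb u$ on $S$, which together show that $\lambda I - A : S \to X$ is a bijection with bounded inverse $R_\lambda$, hence $R_\lambda$ is the resolvent of $(A, S)$ at $\lambda$.

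For the first identity, I would rely on Lemma~\ref{lem11b}, which already ensures $R_\lambda \mb f \in S$ and, along the way, establishes the pointwise relation $r_{n+1}[R_\lambda\mb f]_{n+1} - r_n [R_\lambda\mb f]_n = (\lambda + p_n)[R_\lambda\mb f]_n - f_n$. Since $\theta_n = r_n + p_n$, rearranging gives $[\mbb A R_\lambda \mb f]_n = r_{n+1}[R_\lambda\mb f]_{n+1} - \theta_n[R_\lambda\mb f]_n = \lambda [R_\lambda\mb f]_n - f_n$; because $R_\lambda \mb f \in S = D(A)$, this is precisely $(\lambda I - A) R_\lambda \mb f = \mb f$, so $\lambda I - A$ is surjective with $R_\lambda$ as a right inverse.

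For the converse, I would take $\mb u \in S$ and first verify that $A \mb u \in X$: the splitting $[\mbb A \mb u]_n = r_{n+1} u_{n+1} - r_n u_n - p_n u_n$ expresses $\mbb A \mb u$ as a combination of $\mb{pu}$ and a shifted copy of $\triangle(\mb{ru})$, both of which belong to $X$ by the definition of $S$. Setting $g_n = (\lambda + \theta_n) u_n - r_{n+1} u_{n+1}$ and substituting into the explicit formula~\eqref{dsg5}, I would use the identity $(\lambda + \theta_n)\prod_{j=i+1}^n \frac{r_j}{\lambda + \theta_j} = r_n \prod_{j=i+1}^{n-1}\frac{r_j}{\lambda + \theta_j}$ valid for $n > i$ to collapse the partial sum through index $N$ to
\[
(\lambda + \theta_i)\, u_i \;-\; r_{N+1} u_{N+1} \prod_{j=i+1}^N \frac{r_j}{\lambda + \theta_j}.
\]

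The decisive step, and the only place where the fine structure of $S$ enters, is showing the boundary term vanishes as $N \to \infty$. Each factor $r_j/(\lambda+\theta_j)$ is at most $1$ because $\theta_j \geq r_j$ and $\lambda > 0$, so the product is uniformly bounded, while $\lim_{n\to\infty} r_n u_n = 0$ by $\mb u \in S$. Dividing by $\lambda + \theta_i$ then yields $[R_\lambda (\lambda I - A)\mb u]_i = u_i$, completing the identification $R_\lambda = (\lambda I - A)^{-1}$. The necessity of the pointwise decay condition in $S$ is transparent here: without it, the nontrivial recursive solutions \eqref{2.4sol} would sit in $\ker(\lambda I - \mbb A)$ and destroy injectivity, exactly as reflected in Proposition~\ref{maxdom}.
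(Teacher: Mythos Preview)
Your argument is correct and follows the same two-step scheme as the paper: verify $(\lambda I - A)R_\lambda = I$ on $X$, then $R_\lambda(\lambda I - A) = I$ on $S$ via the telescoping partial sum whose boundary term is killed by $\lim_{n\to\infty} r_n u_n = 0$ together with the bound $r_j/(\lambda+\theta_j)\le 1$. The only cosmetic difference is that for the right-inverse step you recycle the pointwise identity already derived in the proof of Lemma~\ref{lem11b}, whereas the paper re-derives it by a short direct computation; otherwise the arguments coincide.
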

\begin{proof}
First, we show that $R_\lambda$, $\lambda>0$ is the right inverse of $(I-A, S)$. Indeed,
$R_\lambda $ maps $X$ into $S$, hence $(\lambda I - A)R_\lambda: X \rightarrow X $
is well defined.
Further, for any $\mb v = R_\lambda \mb f$, $\mb f \in X,$ we have
\begin{align*}
[(\lambda I - A)R_\lambda\mb f]_{n} &= (\lambda + \theta_n) [R_\lambda\mb f]_{n} -  r_{n+1}[R_\lambda\mb f]_{n+1} \\
&= \sum_{i=n}^{\infty} f_i \prod_{j=n+1}^{i} \frac{r_{j}}{\lambda + \theta_{j}} - \sum_{i=n+1}^{\infty} f_{i} \prod_{j=n+1}^{i} \frac{r_{j}}{\lambda + \theta_{j}} \\
&= f_n + \sum_{i=n+1}^{\infty} f_i \prod_{j=n+1}^{i} \frac{r_{j}}{\lambda + \theta_{j}} - \sum_{i=n+1}^{\infty} f_{i} \prod_{j=n+1}^{i} \frac{r_{j}}{\lambda + \theta_{j}} = f_n.
\end{align*}

Next we show that the operator $R_\lambda, \lambda>0,$ is the left inverse of $\lambda I - A$. Indeed, for any
$\mb f \in S$, we have
\begin{align*}
[R_\lambda(\lambda I - A)\mb f]_n &=  \lim_{N\to\infty} \frac{1}{\lambda + \theta_n} \sum_{i=n}^{N}
((\lambda + \theta_{i}) f_{i} -  r_{i+1}f_{i+1}) \prod_{j=n+1}^{i} \frac{r_{j}}{\lambda + \theta_{j}} \\
&= \lim_{N\to\infty} \biggl(\sum_{i=n}^{N} f_i \prod_{j=n}^{i-1} \frac{r_{j+1}}{\lambda + \theta_{j}} -
\sum_{i=n+1}^{N+1} f_{i} \prod_{j=n}^{i-1} \frac{r_{j+1}}{\lambda + \theta_{j}}\biggr) \\
&= f_n - \lim_{N\to\infty} f_{N+1}r_{N+1} \prod_{j=n}^N\frac{r_{j}}{\lambda+\theta_j}  = f_{n},
\end{align*}
for
\[
\lim_{N\to\infty} |f_{N+1} r_{N+1}| \prod_{j=n}^N\frac{r_{j}}{\lambda+\theta_j}
\le \frac{1}{\lambda}\lim_{N\to\infty} |r_N f_N| = 0,
\]
when $\mb f \in S$. We conclude that $R_\lambda$, $\lambda>0,$ is the resolvent of $(A,S)$.
\end{proof}
We can write $S=D(A)$. The following result gives an explicit characterization of  $\overline{A_0+A_1}$.
\begin{theorem}\label{th2.5}
The operator $(A, D(A))$ generates a substochastic $C_0$-semigroup $\{S_{A}(t)\}_{t \geq 0}$ on $X$.
Furthermore, $K=\overline{A_0+A_1} = A$.
\end{theorem}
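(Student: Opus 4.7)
The strategy is to derive the generation statement from Lemmas~\ref{lem11}--\ref{lem12} via the Hille--Yosida theorem, and then to identify the resulting generator $A$ with the closure $K = \overline{A_0 + A_1}$ delivered by Theorem~\ref{alt3} via a standard resolvent comparison.

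For the first assertion, I would verify the hypotheses of the Hille--Yosida theorem for positive contraction $C_0$-semigroups applied to $(A, D(A)) = (\mbb A|_S, S)$. Lemma~\ref{lem12} shows that, for every $\la > 0$, $R_\la$ is a two-sided inverse of $\la I - A$, so $(0,\infty) \subseteq \rho(A)$ with $R(\la, A) = R_\la$; Lemma~\ref{lem11} then supplies the contraction bound $\|\la R(\la, A)\| \le 1$. Positivity of $R_\la$ is transparent from the nonnegative series~\eqref{dsg5}. Density of $D(A) = S$ in $X$ follows from the observation that every finitely supported sequence lies in $S$ trivially (each of the three conditions defining $S$ is automatic for eventually vanishing sequences), and such sequences are dense in the weighted $\ell^1$-space $X$. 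The theorem then delivers the required substochastic semigroup $\{S_A(t)\}_{t \ge 0}$.

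For the identity $K = A$, I would first show $A_0 + A_1 \subseteq A$. If $\mb f \in D(A_0)$, then $\boldsymbol{\theta}\mb f \in X$; splitting $\theta_n = r_n + a_n + d_n$ into its nonnegative summands gives $\mb{rf} \in X$ and $\mb{pf} \in X$. From $\mb{rf} \in X$ one obtains $\triangle(\mb{rf}) \in X$ (via the triangle inequality $|r_{n-1}f_{n-1} - r_n f_n| \le r_{n-1}|f_{n-1}| + r_n|f_n|$) and $\lim_{n \to \infty} r_n f_n = 0$ (since $\sum n r_n |f_n| < \infty$ forces even $n r_n f_n \to 0$). Thus $\mb f \in S$ and $A\mb f = (A_0 + A_1)\mb f$. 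Closedness of $A$, which follows from its bounded and everywhere-defined resolvent, then promotes this to $K = \overline{A_0 + A_1} \subseteq A$.

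For the reverse inclusion I would use that both $K$ and $A$ generate $C_0$-semigroups with $(0,\infty) \subseteq \rho(K) \cap \rho(A)$. Given $\mb y \in D(A)$, pick any $\la > 0$ and set $\mb x = (\la I - A)\mb y$, $\mb y' = R(\la, K)\mb x \in D(K) \subseteq D(A)$. Both $\mb y$ and $\mb y'$ satisfy $(\la I - A)(\cdot) = \mb x$, so injectivity of $\la I - A$ forces $\mb y = \mb y' \in D(K)$, giving $D(A) \subseteq D(K)$. The step that I expect to demand the most care is the verification that $S$ contains $D(A_0)$: the conditions $\triangle(\mb{rf}) \in X$ and $r_n f_n \to 0$ defining $S$ are formulated rather differently from the single weighted-summability condition defining $D(A_0)$, so although each implication is elementary, one has to unfold the definition of $\theta_n$ carefully to extract them.
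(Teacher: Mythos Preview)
Your proposal is correct and follows essentially the same route as the paper. The paper's own proof is a two-line sketch: the generation assertion is attributed to Lemmas~\ref{lem11}--\ref{lem12}, positivity of $R_\lambda$, and Hille--Yosida, while the identification $K=A$ is obtained by invoking the uniqueness statement recorded after Theorem~\ref{alt3} (namely that $\{S_K(t)\}_{t\ge 0}$ is the only semigroup whose generator extends $(A_0+A_1,D(A_0))$). Your resolvent-comparison argument for $D(A)\subseteq D(K)$ is precisely the standard mechanism behind that uniqueness, and your explicit verification that $D(A_0)\subseteq S$ and your density check for $S$ simply fill in details the paper leaves to the reader.
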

\begin{proof}
The first assertion follows from Lemmas~\ref{lem11},  \ref{lem12},  the classical Hille-Yosida
theorem \cite{Pazy1983} and positivity of the resolvent.
The second statement is a  by product of Theorem~\ref{alt3} - it follows by the uniqueness of the semigroup.
\end{proof}
We observe that there is an apparent inconsistency between Theorem \ref{th2.5} and Corollary \ref{cor2.2}, as conditions (\ref{finsums}) and (\ref{limcon}) are different than that defining $D(A)$. We can, however, directly prove that, indeed, if $\mb f \in D(A)$, then it satisfies (\ref{finsums}) and (\ref{limcon}).  
\begin{proposition} Define
\begin{equation}
S'=  \{\mb f \in X\;: \mb {pf} \in X, \mb{ r f}\in X_0, \lim\limits_{n\to \infty} nr_nf_n =0\}.
\label{con1}
\end{equation}
Then $D(A) \subset S'$.\label{prop2.2}
\end{proposition}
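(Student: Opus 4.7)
The plan is to derive both required properties from a single summation-by-parts identity. Fix $\mathbf{f} \in D(A) = S$ and $N \geq 1$, and compute
\begin{equation*}
\sum_{n=1}^{N} n\,[A\mathbf{f}]_n = \sum_{n=1}^{N} n(r_{n+1}f_{n+1} - r_n f_n) - \sum_{n=1}^{N} n(a_n+d_n) f_n.
\end{equation*}
A routine index shift in the first sum on the right gives the key identity
\begin{equation*}
N r_{N+1} f_{N+1} = \sum_{n=1}^{N} n\,[A\mathbf{f}]_n + \sum_{n=1}^{N} r_n f_n + \sum_{n=1}^{N} n p_n f_n,
\end{equation*}
where $p_n = a_n+d_n$. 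The rest of the proof consists in understanding each of these three partial sums.

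Next, I would establish $\mathbf{rf} \in X_0$. Membership in $S$ gives $\lim_n r_n f_n = 0$ together with $\triangle(\mathbf{rf}) \in X$, which unpacks as $\sum_{n\geq 2} n\,|r_{n-1}f_{n-1} - r_n f_n| < \infty$, hence, after a shift, $\sum_{m\geq 1} (m+1)|r_m f_m - r_{m+1}f_{m+1}| < \infty$. Using the telescoping series (valid because $r_m f_m \to 0$)
\begin{equation*}
r_n f_n = \sum_{k=n}^{\infty} (r_k f_k - r_{k+1} f_{k+1}),
\end{equation*}
I get $|r_n f_n| \le \sum_{k \geq n} |r_k f_k - r_{k+1} f_{k+1}|$. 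Summing over $n$ and switching the order of summation (Tonelli, all entries nonnegative),
\begin{equation*}
\sum_{n=1}^{\infty} |r_n f_n| \leq \sum_{k=1}^{\infty} k\,|r_k f_k - r_{k+1} f_{k+1}| \leq \|\triangle(\mathbf{rf})\| < \infty,
\end{equation*}
so $\mathbf{rf} \in X_0$.

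With this in hand, each of the three sums in the key identity converges absolutely as $N \to \infty$: $\sum n[A\mathbf{f}]_n$ because $A\mathbf{f} \in X$, $\sum r_n f_n$ by the step above, and $\sum n p_n f_n$ because $\mathbf{pf} \in X$. Consequently $L := \lim_{N\to\infty} N r_{N+1} f_{N+1}$ exists and is finite. To rule out $L \neq 0$, observe that otherwise, for $N$ large enough, $|(N-1)r_N f_N - L| < |L|/2$, so $|r_N f_N| \geq |L|/(2(N-1))$, forcing $\sum_n |r_n f_n|$ to diverge and contradicting $\mathbf{rf} \in X_0$. Hence $L = 0$, i.e.\ $\lim_n n r_n f_n = 0$. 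Combined with $\mathbf{pf} \in X$, which was already part of $S$, this yields $\mathbf{f} \in S'$.

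The main obstacle is the recognition that the two \emph{weakest} pieces of data in $S$, namely $\triangle(\mathbf{rf}) \in X$ and $\lim_n r_n f_n = 0$, are together strong enough to upgrade to $\mathbf{rf} \in X_0$; once this is secured, the summation-by-parts identity and the elementary harmonic-divergence argument do the rest. The computation is a direct variant of the one underlying the proof of Theorem~\ref{alt3}, but here used to extract pointwise and summability information instead of just controlling the mass functional.
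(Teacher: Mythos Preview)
Your argument is correct. The overall strategy---Abel summation to expose the boundary term $Nr_{N+1}f_{N+1}$, followed by a harmonic-divergence contradiction---matches the paper's, but the execution differs in two useful ways. First, the paper begins by reducing to nonnegative $\mathbf f$, writing $\mathbf f = R(\lambda,A)\mathbf g_+ - R(\lambda,A)\mathbf g_-$ via resolvent positivity; you bypass this entirely by working with absolute values throughout, which is cleaner and shows the result is a purely combinatorial consequence of the conditions defining $S$, not of the operator-theoretic structure of $D(A)$. Second, to obtain $\mathbf{rf}\in X_0$ the paper rearranges the double sum $\sum_n n[\triangle(\mathbf{rf})]_n$ into columns of telescoping series, whereas you use the tail representation $r_nf_n=\sum_{k\ge n}(r_kf_k-r_{k+1}f_{k+1})$ together with Tonelli to bound $\sum_n|r_nf_n|$ directly by $\|\triangle(\mathbf{rf})\|$; this is a tidier route to the same estimate. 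Your framing of the partial-sum identity in terms of $\sum_{n\le N} n[A\mathbf f]_n$ rather than $\sum_{n\le N} n[\triangle(\mathbf{rf})]_n$ is cosmetically different but equivalent, since $[A\mathbf f]_n = -[\triangle(\mathbf{rf})]_{n+1} - p_nf_n$.
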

\begin{proof}
Let $\mb f \in S$. In general, the positive and negative parts of $\mb f$, $\mb f_\pm,$ do not belong to $S$.  However, since $S=D(A)$ is the domain of a resolvent positive operator, we can write $\mb f$ as a difference of nonnegative elements $\mb f = \mb f^+-\mb f^-$ (not necessarily the positive and negative part of $\mb f$). Indeed, for some $\mb g \in X$ we have $$\mb f = R(\lambda, A)\mb g = R(\lambda,A)\mb g_+-R(\lambda,A)\mb g_-=:\mb f^+-\mb f^-, $$ where $\mb g = \mb g_+-\mb g_-$ is the decomposition of $\mb g$ into its positive and negative parts in $X$.  Thus we can assume $\mb f \in S_+$.  Since  $\triangle \mb{rf} \in X$, also $\sum_{n=1}^\infty [\triangle \mb{rf}]_n$ converges. However,
\begin{eqnarray*}
\sum\limits_{n=1}^\infty [\triangle (\mb{rf})]_n &=& \lim\limits_{N\to \infty}\sum\limits_{n=1}^N [\triangle (\mb{rf})]_n = 2r_1f_1 - \lim\limits_{N\to \infty}r_Nf_N = 2r_1f_1,
\end{eqnarray*}
by the last condition in (\ref{S}).
Again by $\triangle \mb{rf} \in X,$ the terms in the sum can be rearranged
\begin{eqnarray*}
\sum\limits_{n=1}^\infty n[\triangle (\mb{rf})]_n &=& r_1f_1 + 2(r_1f_1-r_2f_2) + 3(r_2f_2-r_3f_3)+\ldots \\
&=&  \sum\limits_{n=1}^\infty [\triangle (\mb{rf})]_n + \sum\limits_{n=2}^\infty (r_{n-1}f_{n-1}-r_nf_n) + \sum\limits_{n=3}^\infty  (r_{n-1}f_{n-1}-r_nf_n) +\ldots\\
&=& 2r_1f_1 + \sum\limits_{n=1}^\infty r_nf_n,
\end{eqnarray*}
 hence  $\mb{rf}\in X_0$. To complete the proof, we note that also
\begin{equation}
\sum\limits_{n=1}^\infty n[\triangle (\mb{rf})]_n = \lim\limits_{N\to \infty}\sum\limits_{n=1}^N [\triangle (\mb{rf})]_n = 2r_1f_1  +\lim\limits_{N\to \infty}\left(\sum\limits_{n=1}^{N-1} r_nf_n - Nr_Nf_N\right).
\label{boo}
\end{equation}
This shows that $\lim_{N\to \infty}Nr_Nf_N = c\ge 0$ exists. If  $c> 0$, then $r_Nf_N \geq \frac{c'}{N}>0$
for $c'>0$ and sufficiently large $N$ which contradicts $\mb {rf} \in X_0$. Hence $S\subset S'$. 
\end{proof}
The last aspect to be clarified is the relation of $D(A)$ to the domain $D(K_{\max})$, explicitly given as
\begin{equation}
D(K_{\max}) =\{\mb f\in X\;: \sum\limits_{n=1}^\infty n|a_nf_n+d_nf_n + r_nf_n-r_{n+1}f_{n+1}| <\infty\}.
\label{dkmax}
\end{equation}
Proposition \ref{maxdom} asserts that
$$
D(K_{\max}) = S,
$$
provided (\ref{eigeq1}) has no solutions in $X$. The latter  is related to the last condition in (\ref{S}),
\begin{equation}\label{eq2.7}
\lim_{n\to\infty} r_nf_n = 0,
\end{equation}
that, in general, cannot be discarded. We observe that the solution $\mb f_\la$ to (\ref{2.4}), given by (\ref{2.4sol}), belongs to $X$ if and only if
$$
\sum\limits_{n=2}^\infty \frac{n}{\la+\theta_n}\prod\limits_{i=2}^n\frac{\la+\theta_i}{r_i}<\infty.
$$
In particular, if
\begin{subequations}
\begin{equation}
\label{eq112a}
\prod_{j=2}^{\infty} \frac{\lambda + \theta_{j}}{r_{j}}<\infty ,
\end{equation}
\begin{equation}
\label{eq112b}
\sum_{n=1}^{\infty} \frac{n}{\lambda + \theta_n} < \infty,
\end{equation}
\end{subequations}
then $\mb f_\la \in X$.
The relation between $D(K_{\max})$ and $D(A)$ is then settled by the following result.
\begin{lemma}
If one of the conditions \eqref{eq112a} or \eqref{eq112b} is not satisfied and $\mb {pf}, \triangle (\mb {rf}) \in X$,
then \eqref{eq2.7} holds.
\end{lemma}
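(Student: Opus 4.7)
I will argue by contrapositive: assuming $L := \lim_{n\to\infty} r_n f_n$ exists and is nonzero, I show that both \eqref{eq112a} and \eqref{eq112b} must then hold, contradicting the hypothesis. The existence of the limit follows at once from $\triangle(\mb{rf})\in X$: dropping the factor $n\ge 1$ from $\sum_{n\ge 2} n|r_{n-1}f_{n-1}-r_n f_n|<\infty$ leaves $\sum_{n\ge 2}|r_{n-1}f_{n-1}-r_n f_n|<\infty$, so the sequence $s_n := r_n f_n$ is Cauchy and converges to some $L$. Supposing $L\ne 0$, pick $N$ with $|r_n f_n|\ge |L|/2$ for $n\ge N$; the resulting pointwise lower bound
$$
|f_n|\ge \frac{|L|}{2r_n},\qquad n\ge N,
$$
drives the rest of the argument.

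Using $\mb f\in X$, this gives $\sum_{n\ge N} n/r_n \le (2/|L|)\sum_{n\ge N} n|f_n|<\infty$, and since $\lambda+\theta_n\ge r_n$ we obtain $\sum n/(\lambda+\theta_n)<\infty$, i.e.\ \eqref{eq112b}. Applying the same bound to the hypothesis $\mb{pf}\in X$ yields $\sum n p_n/r_n<\infty$, hence $\sum p_n/r_n<\infty$; combined with $\sum 1/r_n<\infty$ from the previous estimate, this produces $\sum_{n\ge 2}(\lambda+p_n)/r_n<\infty$. The elementary bound $\log(1+x)\le x$ on $[0,\infty)$ then gives
$$
\sum_{n\ge 2}\log\frac{\lambda+\theta_n}{r_n}=\sum_{n\ge 2}\log\Bigl(1+\frac{\lambda+p_n}{r_n}\Bigr)<\infty,
$$
so the infinite product $\prod_{j\ge 2}(\lambda+\theta_j)/r_j$ converges, which is \eqref{eq112a}. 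Both conditions therefore hold, contradicting the hypothesis, so $L=0$.

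The argument is essentially bookkeeping and I do not foresee a serious obstacle. The one observation that makes the whole thing run is that a nonzero tail limit of $r_n f_n$ forces the pointwise lower bound $|f_n|\ge |L|/(2r_n)$; after that, the two hypotheses $\mb f\in X$ and $\mb{pf}\in X$ plug directly into the summability statements behind \eqref{eq112a} and \eqref{eq112b}, and the passage from the summable quantity $(\lambda+p_n)/r_n$ to a convergent infinite product is handled by the standard logarithm comparison.
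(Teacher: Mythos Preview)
Your proof is correct and follows essentially the same approach as the paper's: both arguments note that $\triangle(\mb{rf})\in X$ forces $\lim_n r_nf_n$ to exist, assume the limit is nonzero to obtain the pointwise lower bound $|f_n|\ge c/r_n$, and then feed this into the hypotheses $\mb f\in X$ and $\mb{pf}\in X$ to derive the summability statements behind \eqref{eq112b} and \eqref{eq112a}, respectively. The only cosmetic difference is that you package the argument as a contrapositive (showing both conditions must hold) and spell out the $\log(1+x)\le x$ step, whereas the paper treats the two conditions case by case and leaves the product--series equivalence implicit.
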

\begin{proof}
Note that in view of the condition $\triangle (\mb {rf}) \in X$, $\lim_{n\to\infty} r_nf_n$  exists and is finite.
Assume $\lim_{n\to\infty} r_nf_n \ne 0$.
Then $|r_nf_n| \geq c$ for some  $c>0$ and all $n$ sufficiently large.
If \eqref{eq112a} does not hold,  then the series
$\sum_{j=1}^{\infty} \frac{\lambda + p_j}{r_j}$  diverges. However, for large values of $n$, we have
\[
\sum_{j=n}^{n+k} \frac{\lambda + p_j}{r_j} \leq  \frac{1}{c} \sum_{j=n}^{n+k} (\lambda + p_j)
|f_j| \le \frac{1}{c} \sum_{j=n}^{\infty} (\lambda + p_j) |f_j| < \infty,
\]	
a contradiction.

Similarly, if   \eqref{eq112b} does not hold, then
\[
\sum_{n=1}^{N} \frac{n}{\lambda + \theta_n} \le \frac{1}{c} \sum_{n=1}^{N} n |f_n|\frac{r_n}{\lambda + \theta_n} \le \frac{1}{c} \sum_{n=1}^{\infty} n  |f_n| < \infty.
\]
Hence, \eqref{eq2.7} holds.
\end{proof}
\begin{example}
Summarizing, if there are no solutions to (\ref{2.4}) in $X$, then one of the conditions  \eqref{eq112a} or \eqref{eq112b} is not satisfied and hence
\begin{equation}
D(A) = D(K_{\max}) = \{ \mb f \in X: \mb{pf} \in X,  \triangle (\mb{rf}) \in X \}.
\label{dadkmax}
\end{equation}
This is a typical case. We observe that for (\ref{eq112a}) to hold, $\sum_{j=1}^{\infty} \frac{\lambda + p_j}{r_j}$ must converge and this requires $r_j$ to diverge to infinity much faster that $p_j$. Then, in (\ref{eq112b}), $r_j$ would be a dominant term in $\theta_j$. For instance, if $r_j = j^p$, $p\leq 2$, or either $a_j$ or $d_j$ grows faster than $r_j/j$,  then (\ref{dadkmax}) holds. We observe that the latter condition is satisfied if, in particular, the assumption of Corollary \ref{coranal} is satisfied.

We also note that if (\ref{dadkmax}) holds, then we can consider $A$ as the sum
$$
A = L+D
$$
of the diagonal operator $L\mb f = -\mb{pf}$, corresponding to the loss term in the fragmentation and the sedimentation, and the operator $D\mb f = (r_{n+1}f_{n+1}-r_nf_n)_{n=1}^\infty$, corresponding to the death of particles, mentioned in Introduction. Such a representation makes more sense as it separates two independent processes driving the evolution of the system.
\end{example}

\subsection{The decay-fragmentation semigroup}
Now we turn to the complete model \eqref{acp1}. As in any fragmentation model, it can be shown that $\mbb B$ is finite and nonnegative on $D(B) = D(A_0)\supset D(A)$, hence the operator $(A+B, D(A))$ is well defined.  Again, we shall apply the Kato-Voigt theorem \cite{Banasiak2001, Banasiak2006a} to show that there exist a smallest extension $G$ of $A + B$ that generates a substochastic semigroup
$\{S_{G}(t)\}_{t \geq 0}$ on $X$.
\begin{theorem}
\label{df1}
Let $X$, $A$ and $B$ be as defined earlier. Then the closure $G = \overline{A + B}$ generates a
substochastic semigroup $\{S_{G}(t)\}_{t \geq 0}$ on $X$. If, in addition, for some $\lambda>0$
\begin{equation}
	\label{eq2.8}
	\limsup_{n \to \infty} \frac{a_n}{d_n}  < \infty,
\end{equation}
then $D(G) = D(A)$.
\end{theorem}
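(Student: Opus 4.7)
My plan is to apply the Kato--Voigt perturbation theorem a second time: now $(A,D(A))$ of Theorem~\ref{th2.5} plays the role of the substochastic generator, while $B$ is the positive perturbation (legitimate since $D(B)=D(A_0)\supset D(A)$). The only nontrivial ingredient is the verification of dissipativity on $D(A)_+$.

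For $\mb f\in D(A)_+$ I would compute
\[
\sum_{n=1}^\infty n[B\mb f]_n = \sum_{n=1}^\infty n\sum_{j=n+1}^\infty a_j b_{n,j}f_j = \sum_{j=2}^\infty a_j f_j\sum_{n=1}^{j-1} nb_{n,j} = \sum_{j=2}^\infty ja_jf_j
\]
by interchanging summation and invoking the mass conservation identity \eqref{equ12}; the manipulations are legal since Proposition~\ref{prop2.2} gives $\mb{pf}\in X$. Together with the telescoping computation from the proof of Theorem~\ref{alt3} and the boundary condition $\lim_{n\to\infty}nr_nf_n=0$, which also holds on $D(A)$ by Proposition~\ref{prop2.2}, the $a_n$ terms cancel and I would obtain
\[
\sum_{n=1}^\infty n[(A+B)\mb f]_n = -\sum_{n=1}^\infty r_nf_n-\sum_{n=1}^\infty nd_nf_n\leq 0.
\]
This is the Kato--Voigt condition, so \cite[Corollary 5.17]{Banasiak2006a} furnishes a smallest substochastic $C_0$-semigroup $\{S_G(t)\}_{t\geq 0}$ generated by an extension $G\supset A+B$. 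The identification $G=\overline{A+B}$ then proceeds exactly as in Theorem~\ref{alt3}: repeating the same moment calculation on the maximal extension $\mbb A+\mbb B$ yields the reversed inequality modulo the non-negative limit term $\liminf Nr_{N+1}f_{N+1}$, the two inequalities match, and \cite[Theorem 6.22]{Banasiak2006a} concludes.

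For the second assertion, under \eqref{eq2.8} one can pick $C>0$ and $N$ such that $a_n\leq Cd_n$ for $n\geq N$. For $\mb f\in D(A)_+$ this gives
\[
\|B\mb f\| = \sum_{j=2}^\infty ja_jf_j\leq C\sum_{j=N}^\infty jd_jf_j + C\sum_{2\leq j<N} jd_jf_j,
\]
so $B$ is controlled by the $d$-part of the dissipation functional, which is itself a piece of $\|A\mb f\|$ via Corollary~\ref{cor2.2}. To upgrade this bound to $D(G)=D(A)$ I would show $\|BR(\lambda,A)\|<1$ for $\lambda$ large. Starting from the explicit resolvent formula \eqref{dsg5}, a telescoping estimate parallel to Lemma~\ref{lem11}, in which the factor $a_i/(\lambda+\theta_i)$ is majorized via $a_i\leq Cd_i\leq C\theta_i$, produces a contribution that tends to zero as $\lambda\to\infty$. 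The standard identity $R(\lambda,G)=R(\lambda,A)(I-BR(\lambda,A))^{-1}$ then yields $\mathrm{Ran}\,R(\lambda,G)\subseteq D(A)$, whence $D(G)=D(A)$.

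The principal obstacle is exactly this last step: mere $A$-boundedness of $B$ does not force $D(G)=D(A)$ in the substochastic framework, where a Kato--Voigt extension may silently absorb extra dissipation coming from $D(K_{\max})\setminus D(A)$. What \eqref{eq2.8} prevents is precisely this pathology---it keeps the fragmentation from outpacing the decay---and the quantitative contractivity estimate for $BR(\lambda,A)$ is the technical heart of the argument.
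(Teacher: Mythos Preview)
Your argument for the first assertion coincides with the paper's: Kato--Voigt applied to the pair $(A,B)$, the moment identity $\sum_n n[(A+B)\mb f]_n=-\sum_n(r_n+nd_n)f_n$ on $D(A)_+$ (using Proposition~\ref{prop2.2} for the boundary term), and \cite[Theorem~6.22]{Banasiak2006a} for the identification $G=\overline{A+B}$. One omission: the nonnegative remainder on $D(G)_+$ carries a second piece, the fragmentation tail $\sum_{n>N}a_nf_n\sum_{j\le N}jb_{j,n}$, in addition to $Nr_{N+1}f_{N+1}$; both are nonnegative, so the honesty conclusion is unaffected.

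For the second assertion your route differs from the paper's and the sketch has a gap. The majorization $a_i\le Cd_i\le C\theta_i$ yields only $\tfrac{a_i}{\lambda+\theta_i}\le C$, which does \emph{not} tend to zero as $\lambda\to\infty$; the telescoping of Lemma~\ref{lem11} extracts its factor $1/\lambda$ precisely because the numerator carries no $\theta$-weight, and that mechanism is lost once $a_i$ is inserted. A repair is available via Lemma~\ref{lem11b} rather than Lemma~\ref{lem11}: from $a_n\le Cd_n$ one gets $a_n\le\tfrac{C}{1+C}p_n$, so for $\mb f\ge0$
\[
\|BR_\lambda\mb f\|=\sum_n na_n[R_\lambda\mb f]_n\le\tfrac{C}{1+C}\,\|\mb pR_\lambda\mb f\|+O(\lambda^{-1})\le\tfrac{C}{1+C}\,\|\mb f\|+O(\lambda^{-1}),
\]
the $O(\lambda^{-1})$ absorbing the finitely many indices below the threshold in \eqref{eq2.8}. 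This gives $\|BR_\lambda\|<1$ (not $\to0$), which is what your resolvent identity actually needs. The paper avoids any operator-norm estimate: having established $G=\overline{A+B}$, it invokes \cite[Theorems~6.8 and~6.13]{Banasiak2006a} so that the dissipation functional $c$ and the vanishing of the remainder extend to $D(G)_+$, yielding $\mb{rf}\in X_0$, $\mb{df}\in X$ and $\lim_n nr_nf_n=0$ directly; then \eqref{eq2.8} bounds $\sum_n np_nf_n$ by a multiple of $c(\mb f)$, placing every $\mb f\in D(G)_+$ back in $D(A)$. That argument is shorter and stays within the substochastic framework already in place.
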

\begin{proof}
 In the previous section we have proved  that $A$ generates the substochastic semigroup
$\{S_{A}(t)\}_{t \geq 0}$ on $X$.  It remains to verify that $A+B$ satisfies the last condition of the Kato-Voigt theorem.

Let $ \mb f \in D(A)$. By Proposition \ref{prop2.2}
\[
\lim_{n\to\infty} nr_nf_n = 0.
\]
With the aid of the last identity, (\ref{equ12}) and remembering that $a_1=0$, for $ \mb f \in D(A)_+$ we have, as in Theorem \ref{alt3},
\begin{align*}
\sum_{n=1}^{\infty} n [A \mb f + B \mb f]_{n} &= \sum_{n=1}^{\infty} n \bigg ( r_{n+1}f_{n+1} - (r_n + d_n + a_n)f_n + \sum_{j= n+1}^{\infty} a_{j} b_{n,j} f_{j} \bigg ) \\
&= - \sum_{n=1}^{\infty} (r_n + n d_n) f_n  :=-\sum_{n=1}^\infty c_n f_n:= -c(\mb f) \le 0,
\end{align*}
 where the fragmentation part vanishes due to the conservativeness of the fragmentation process.   Hence, there exists a smallest substochastic semigroup, $\{S_{G}(t)\}_{t \geq 0}$, generated by an extension $(G, D(G))$ of $A + B$.

 Also as in Theorem \ref{alt3}, for the extensions $\mathbb{A}$ and $\mathbb{B}$  of $A$ and $B$ and  $ \mb f \in D(G)_+ $,
we have
\begin{align*}
\sum_{n=1}^{\infty} n [\mathbb{A} \mb f + \mathbb{B} \mb f]_{n} &= \lim_{N \to \infty} \sum_{n=1}^{N} n \bigg ( r_{n+1}f_{n+1} - (r_n + d_n + a_n)f_n + \sum_{j= n+1}^{\infty} a_{j} b_{n,j} f_{j} \bigg ) \\
& = - c(\mb f) + \lim_{N \to \infty} \bigg ( \sum_{n=N+1}^{\infty} a_nf_n \sum_{j=1}^{N} j b_{j,n} + Nr_{N+1}f_{N+1} \bigg )\geq -c(\mb f).
\end{align*}
Hence $G = \overline{A+B}$.

To prove the last statement we argue as in Corollary \ref{cor2.2}. Since $G = \overline{A+B}$, we have
$$
\sum_{n=1}^{\infty} n [G \mb f]_{n} = -c(\mb f),
$$
for any $\mb f \in D(G)$. However, as shown in the proof of Proposition \ref{prop2.2}, it is sufficient to consider $\mb f \in D(G)_+$. Hence  $$
\lim_{N \to \infty} \bigg ( \sum_{n=N+1}^{\infty} a_nf_n \sum_{j=1}^{N} j b_{j,n} + Nr_{N+1}f_{N+1} \bigg )=0,
$$
yielding \[
\lim_{N\to\infty} Nr_Nf_N = 0, \quad \mb f \in D(G)_+,
\]
as both terms in the limit are nonnegative. Furthermore, since $c$ extends to $D(G)_+$, we have $\mb {rf}\in X_0$ and $\mb {df}\in X$.
Next, assuming that \eqref{eq2.8} holds, for $\mb f \in D(G)_+$, we have
\begin{equation*}
\sum_{n=1}^{\infty} n p_n f_n \le
C\bigg ( \sum_{n=1}^{\infty} (r_n + n d_n) f_n   \bigg )  = C c(\mb f),
\end{equation*}
for some constant $C$, so $\mb {pf} \in X$.
Hence, $D(A) =S'  \supseteq D(G)$,
and so $D(G) = D(A)$.
\end{proof}

\subsection{Analyticity and compactness}
In this section, we show that, under additional assumptions on the coefficients, the decay-fragmentation semigroup  $\{S_G(t)\}_{t\ge 0}$ is analytic and compact.  We use the following result from \cite{Arendt1991}.
\begin{theorem}\label{the3.2}
Let $X$ be a Banach lattice, $(A, D(A))$ be a generator of a positive analytic semigroup and
$(B, D(A))$ be a positive operator. Assume also that $(\lambda I - (A+B), D(A))$ has a nonnegative
inverse for some $\lambda>s(A)$. Then $(A+B, D(A))$ generates a positive analytic semigroup.
\end{theorem}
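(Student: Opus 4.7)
The plan is to verify the conclusion by a resolvent-identity and Neumann-series argument that exploits the Banach-lattice structure in an essential way. The target is to produce the analytic-semigroup resolvent estimate $\|R(\lambda, A+B)\| \le M/|\lambda - \omega|$ on a translated sector $\omega + \Sigma_\theta$ with $\theta > \pi/2$, from which generation of a positive analytic semigroup follows by the standard characterization.

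First, I would anchor the perturbation on the real axis. Since $R(\lambda_0, A+B) \ge 0$ exists at one point $\lambda_0 > s(A)$ by hypothesis, the second resolvent identity
\[
R(\lambda_0, A+B) = R(\lambda_0, A) + R(\lambda_0, A)\, B\, R(\lambda_0, A+B),
\]
together with positivity of each factor, gives the domination $R(\lambda_0, A+B) \ge R(\lambda_0, A) \ge 0$. Iterating and combining with a Neumann expansion around $\lambda_0$, one extends $R(\mu, A+B)$ to a right half-line $\mu \ge \lambda_0$ on which it remains positive and bounded; a Hille-Yosida estimate (consistent with Theorem~\ref{df1}) then produces $\|R(\mu, A+B)\| \le M/(\mu - \omega)$ for real $\mu > \omega$.

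Second, I would push the resolvent into the complex half-plane via the lattice domination
\[
|R(\sigma + i\tau, A+B)\, x| \le R(\sigma, A+B)\, |x|, \qquad \sigma > \omega, \ \tau \in \mbb R,
\]
valid for any positive $C_0$-semigroup, which implies $\|R(\sigma + i\tau, A+B)\| \le \|R(\sigma, A+B)\|$. To upgrade this Hille-Yosida-type bound to the wider sector required for analyticity, I would write $R(\lambda, A+B) = R(\lambda, A)\bigl(I - B R(\lambda, A)\bigr)^{-1}$ on the sector $\omega + \Sigma_\theta$ where $A$ is sectorial. Positivity of $B:D(A)\to X$ in a Banach lattice forces $A$-boundedness of $B$ by a closed-graph argument on $(D(A),\|\cdot\|_A)$, so $B R(\lambda, A)$ is bounded on the sector and has norm strictly less than one once $|\lambda|$ is large. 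The Neumann series then delivers the analytic resolvent bound on the distal portion of the sector, and the real-axis bound from the first step, propagated via the lattice domination, covers the remaining bounded region of the sector.

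The main obstacle is bridging the local, real-axis hypothesis, a single positive resolvent at $\lambda_0$, with the global complex-sector estimate required for analyticity. The classical sectorial perturbation theorem needs a relative $A$-bound strictly less than one for $B$, which is not given here, and the Banach-lattice positivity must be invoked precisely to compensate. One has to check carefully that the Neumann series for $\bigl(I - BR(\lambda, A)\bigr)^{-1}$ converges on a sector of opening strictly greater than $\pi/2$, not merely on a half-plane, and that the two estimates patch together to give the uniform analytic bound on the full sector.
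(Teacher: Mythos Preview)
The paper does not prove Theorem~\ref{the3.2}; it is quoted verbatim as a result from \cite{Arendt1991} and used as a black box in the subsequent Theorem~\ref{the3.3}. So there is no ``paper's own proof'' to compare your attempt against.

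On the merits of your sketch, there is a genuine gap in the second step. Your argument hinges on the Neumann series for $\bigl(I - B R(\lambda, A)\bigr)^{-1}$ converging on a sector of opening $>\pi/2$, for which you need $\|B R(\lambda, A)\| < 1$ there. You try to get this from $A$-boundedness of $B$, but $A$-boundedness only yields
\[
\|B R(\lambda, A)\| \le a\,\|A R(\lambda, A)\| + b\,\|R(\lambda, A)\| \le a\,(1+M) + \frac{bM}{|\lambda-\omega|},
\]
which stays bounded away from zero unless the relative bound $a$ can be taken arbitrarily small --- and that is precisely the hypothesis you say is \emph{not} available. Moreover, your justification of $A$-boundedness (``positivity forces boundedness by closed graph on $(D(A),\|\cdot\|_A)$'') is not sound as stated: $(D(A),\|\cdot\|_A)$ is not in general a Banach lattice, so the classical ``positive $\Rightarrow$ bounded'' theorem does not apply directly. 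One can extract boundedness of $B R(\lambda_0, A+B)$ from the assumed invertibility, but that still does not give the strict contraction you need on the complex sector.

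The actual Arendt argument does not proceed via a naive Neumann series on the sector. It exploits a \emph{real-axis} characterization of analyticity for positive semigroups on Banach lattices (roughly: analyticity is equivalent to a growth condition on powers of the resolvent for real $\lambda$), and then shows that this real condition is inherited by $A+B$ via the domination $0 \le R(\lambda, A) \le R(\lambda, A+B)$ and the resolvent identity. Your lattice-domination inequality $|R(\sigma+i\tau, A+B)x| \le R(\sigma, A+B)|x|$ is the right kind of ingredient, but by itself it only gives half-plane (Hille--Yosida) bounds, not sectorial ones, as you note; the bridge to analyticity requires the lattice-specific characterization, which your sketch does not invoke.
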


\begin{theorem}\label{the3.3}
Assume that \eqref{condanal} and \eqref{eq2.8} are satisfied. Then the semigroup $\{S_{G}(t)\}_{t \geq 0}$, generated by $(A + B, D(A))$ is analytic on $X$.
In addition, if
\begin{equation}
\liminf_{n\to\infty} \theta_n = \infty,
\label{condcom}
\end{equation}
then $\{S_{G}(t)\}_{t \geq 0}$ is immediately compact.
\end{theorem}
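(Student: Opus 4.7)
The strategy is to apply Theorem \ref{the3.2} with the splitting $G = A_0 + (A_1 + B)$, taking $A_0$ as the analytic main part and $A_1 + B$ as the positive perturbation. First I verify that this decomposition is available on a common domain. Corollary \ref{coranal} shows that \eqref{condanal} gives $D(A) = D(A_0)$, while Theorem \ref{df1} uses \eqref{eq2.8} to give $D(G) = D(A)$; combining them yields $D(G) = D(A_0)$. Since $\theta_i \ge r_i$ automatically implies $D(A_0) \subset D(A_1)$ and $D(B) \supset D(A_0)$ holds by construction, the operator $A_1 + B$ is well defined and positive on $D(A_0)$.

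Next I verify the hypotheses of Theorem \ref{the3.2}. The space $X$ is a Banach lattice. The diagonal operator $[A_0 \mb f]_i = -\theta_i f_i$ has nonpositive multipliers, so its resolvent satisfies a sectorial estimate of the form $\|(\la I - A_0)^{-1}\| \le (\cos\delta)^{-1}/|\la|$ on $|\arg\la| \le \pi/2 + \delta < \pi$, and hence $A_0$ generates a positive analytic $C_0$-semigroup with $s(A_0) \le 0$. By Theorem \ref{df1}, $G$ itself generates a substochastic semigroup, so $(\la I - G)^{-1}$ exists and is nonnegative for every $\la > 0 \ge s(A_0)$. Theorem \ref{the3.2} then yields analyticity of $\{S_G(t)\}_{t\ge 0}$.

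For immediate compactness under \eqref{condcom}, the plan is to show that the embedding $D(A_0) \hookrightarrow X$ is compact, from which $R(\la,G) \colon X \to D(A_0) \hookrightarrow X$ is a compact operator on $X$. If $(\mb f^{(k)})$ is bounded in the graph norm by $M$, then given $\epsilon > 0$ I pick $N$ with $\theta_n \ge M/\epsilon$ for $n > N$ (possible since \eqref{condcom} forces $\theta_n \to \infty$), and obtain
\[
\sum_{n>N} n|f_n^{(k)}| \le \frac{\epsilon}{M} \sum_{n>N} n\theta_n|f_n^{(k)}| \le \epsilon
\]
uniformly in $k$; a diagonal extraction on the first $N$ coordinates then produces a norm-convergent subsequence in $X$. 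Since an analytic $C_0$-semigroup with compact resolvent is immediately compact (via the Dunford--Taylor representation), the conclusion follows.

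The main conceptual step is recognising that the correct splitting for invoking Theorem \ref{the3.2} is $G = A_0 + (A_1 + B)$ rather than $G = A + B$: only the diagonal loss part $A_0$ is analytic in isolation, so the shift-like piece $A_1$ must be absorbed into the positive perturbation alongside $B$. This absorption is legitimate precisely because of the domain identity $D(G) = D(A_0)$, which is the content of combining \eqref{condanal} and \eqref{eq2.8}. The compactness argument is then routine, using only that the weight $\theta_n$ dominates the reference weight $1$ in the tail.
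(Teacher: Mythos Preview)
Your proof is correct and follows essentially the same route as the paper: the same splitting $G = A_0 + (A_1+B)$ is fed into Theorem~\ref{the3.2}, with $A_0$ diagonal hence analytic, $A_1+B$ positive, and the existence of a positive $R(\lambda,G)$ for $\lambda>0$ supplied by Theorem~\ref{df1}.

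The only noteworthy difference is in the compactness step. The paper factorises
\[
R(\lambda,G)=R(\lambda,A_0)\bigl[I-(A_1+B)R(\lambda,A_0)\bigr]^{-1}
\]
and then proves directly that $R(\lambda,A_0)$ is compact via the uniform-summability criterion in weighted $\ell^1$. You instead prove that the embedding $(D(A_0),\|\cdot\|_{A_0})\hookrightarrow X$ is compact and then compose with $R(\lambda,G)\colon X\to D(G)=D(A_0)$. These two arguments are equivalent: compactness of $R(\lambda,A_0)$ is precisely compactness of the embedding, and your tail estimate is the same uniform-summability computation the paper carries out. Your route avoids invoking the factorisation formula, but it tacitly uses that the graph norms of $G$ and $A_0$ are equivalent on the common domain $D(G)=D(A_0)$ (true by the closed graph theorem, since both operators are closed); it would be worth saying this explicitly so that the boundedness of $R(\lambda,G)\colon X\to (D(A_0),\|\cdot\|_{A_0})$ is not left implicit.
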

\begin{proof}
 Since (\ref{eq2.8}) is satisfied, Theorem \ref{df1} ensures that $(A+B, D(A))$ generates a positive $C_0$-semigroup of contractions in $X$.
Consequently, $R(\lambda, A+B)$ is positive for some $\lambda > 0$.
It was shown earlier that $(A, D(A))$ generates a substochastic semigroup in $X$. On the other hand, Corollary \ref{coranal} states that under (\ref{condanal}), we have
$A = A_0+A_1$, hence $A = A_0+A_1+B$ on $D(A)=D(A_0)$. Since $A_0$ is diagonal, it is clear that it generates an analytic semigroup.  Thus, Theorem \ref{the3.2} gives the analyticity of $\{S_G(t)\}_{t\geq 0}$ on account of $A_1+B$ being positive.

 Since $\{S_{G}(t)\}_{t \geq 0}$ is analytic, it is immediately uniformly continuous. Thus it remains to show that $R(\lambda, A+B)$ is compact for some
$\lambda>s(A)$, as then the result follows from \cite[Theorem 2.3.3]{Pazy1983}.

By virtue of Theorem~\ref{the3.2} and \cite[Theorem 4.3]{Banasiak2006a},  $I  - (A_1+B)R(\lambda, A_0)$ is invertible and
\[
R(\lambda, A_0+A_1+B) = R(\lambda, A_0) [I - (A_1+B)R(\lambda, A_0) ]^{-1}.
\]
In view of the last identity, it suffices to show that $R(\lambda, A_0)$ is compact for some $\lambda>0$.
For each $\mb f\in X$ with $\|\mb f\|\le 1$, we have $\|R(\lambda, A_0)\mb f\|\le \tfrac{1}{\lambda}$ and
\[
\sum_{n = n_0}^\infty \bigl|[R(\lambda,A_0) \mb f ]_{n}\bigr|
\le \sup\limits_{n\ge n_0} \frac{1}{\la +\theta_n} \sum_{n = n_0}^\infty n|f_n|,\]
If \eqref{condcom} holds, we have
\[
\lim_{n_0\to\infty} \sup_{n\ge n_0} \frac{1}{\la +\theta_n} = \frac{1}{\la +\liminf_{n\to \infty}\theta_n} = 0.
\]
Hence the image of the unit ball
$B=\{\mb f\in X : \|f\| \le 1\}$ under $R(\la, A_0)$ is bounded and uniformly summable  and therefore it is precompact, see \cite[IV.13.3]{DS}.
\end{proof}

\section{Long time behaviour of the decay-fragmentation semigroup}
Let us introduce the dual space to $X=\ell_1^1$, \[
X^{*} = \Bigl\{ \mb f^{*} :
\| \mb f^{*}\|_{X^{*}} := \sup_{k \ge 1} \frac{1}{k} | f_{k} |  < \infty \Bigr\}
\]
with the duality pairing
\[
\langle \mb f^{*}, \mb f\rangle := \sum_{i=1}^{\infty} f^{*}_{i} f_i,\quad \mb f \in X, \mb f^*\in X^*.
\]
The following result is analogous to classical theorems on asynchronous exponential growth, see e.g. \cite[Theorem VI.3.5]{Engel2006} since, however, \sem{S_G} is not irreducible, it requires a separate proof.
\begin{theorem}\label{th3.1}
Let the decay-fragmentation semigroup $\sem{S_{G}}$ satisfies conditions  \eqref{condanal}, \eqref{eq2.8} and \eqref{condcom} and let
\begin{equation}
\la_1 := -\min\limits_{n \in \mbb N}\theta_n
\label{eigeneq}
\end{equation}
be the strict minimum of the sequence $(\theta_n)_{n=1}^\infty$. Then there exist constants $\epsilon > 0,$ $M \ge 1$ and $\mb e\in X, \mb e^*\in X^*$ such that for any $\mb f \in X$
\begin{equation}
\label{lt10}
\| e^{-\lambda_{1} t} S_G (t)\mb f  - \langle \mb e^*,\mb f\rangle \mb e \| \le M e^{-\epsilon t}.
\end{equation}
\end{theorem}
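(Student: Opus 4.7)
The plan is to derive \eqref{lt10} via a spectral decomposition of $G$, leveraging Theorem~\ref{the3.3}: since $\{S_G(t)\}_{t\geq 0}$ is analytic and immediately compact, $R(\mu,G)$ is compact, $\sigma(G)$ consists of isolated eigenvalues of finite algebraic multiplicity, and $\omega_0(G)=s(G)$.

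First I would identify $\la_1=-\theta_{n_0}$ as an eigenvalue of $G$ with a positive, finitely supported eigenvector $\mb e$.  Setting $e_j=0$ for $j>n_0$ makes the eigen-equation trivially satisfied at $n=n_0$, and for $n<n_0$ the backward recursion
\[
e_n=\frac{r_{n+1}e_{n+1}+\sum_{j=n+1}^{n_0}a_jb_{n,j}e_j}{\theta_n-\theta_{n_0}}
\]
is legitimate because $\theta_n-\theta_{n_0}>0$ by strict minimality, yielding $\mb e\in D(G)_+$.  An analogous forward recursion for $G^{*}$ starting from $e^{*}_k=0$ for $k<n_0$ and a positive seed $e^{*}_{n_0}$ delivers a nonnegative $\mb e^{*}\in X^{*}$ with $G^{*}\mb e^{*}=\la_1\mb e^{*}$; the required bound $\sup_n e^{*}_n/n<\infty$ follows inductively from \eqref{condanal}, \eqref{eq2.8} and $\theta_n\to\infty$ via $e^{*}_n\lesssim n(r_n+a_n)/\theta_n\lesssim n$.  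After normalising $\langle\mb e^{*},\mb e\rangle=1$, I set $P\mb f=\langle\mb e^{*},\mb f\rangle\mb e$.

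The crux is the spectral gap.  I would exploit the fact that $Y=\{\mb f\in X:f_n=0 \text{ for } n>n_0\}$ is a finite-dimensional $G$-invariant subspace, so relative to $X=Y\oplus Y^{\perp}$ with $Y^{\perp}=\{\mb f\in X:f_n=0 \text{ for } n\leq n_0\}$, $G$ is block upper triangular and $\sigma(G)=\sigma(G_1)\cup\sigma(G_2)$.  The matrix of $G_1$ on $Y$ is itself upper triangular with diagonal $(-\theta_1,\ldots,-\theta_{n_0})$, so strict minimality makes $\la_1$ simple in $\sigma(G_1)$ with eigenvector $\mb e$.  For the infinite block $G_2$ on $Y^{\perp}$, the mass-balance calculation from the proof of Theorem~\ref{df1}, adapted to account for fragmentation mass leaking into $Y$, reads
\[
\mu\sum_{n>n_0}ng_n=-\sum_{n>n_0}(r_n+nd_n)g_n-\sum_{j>n_0}a_jg_j\sum_{i=1}^{n_0}ib_{i,j}
\]
for any positive eigenvector $\mb g$ of $G_2$ at eigenvalue $\mu$.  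Using $\theta_n\geq\theta^{*}>\theta_{n_0}$ for $n>n_0$ together with \eqref{condanal}, \eqref{eq2.8} and $\theta_n\to\infty$, the right-hand side is bounded above by $-(\theta_{n_0}+\delta)\sum ng_n$ for some $\delta>0$, whence $\mu\leq\la_1-\delta$; the same estimate applied to the modulus of a complex eigenvector then excludes $\sigma(G_2)\cap\{\mathrm{Re}\,\lambda\geq\la_1\}$.

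With $\la_1$ thus isolated and algebraically simple in $\sigma(G)$, separated from the rest by a gap $2\epsilon>0$, $P$ is the Riesz projection at $\la_1$, $X=\mathrm{Ran}\,P\oplus\mathrm{Ker}\,P$ is $G$-invariant, and analyticity gives $\omega_0(G|_{\mathrm{Ker}\,P})=s(G|_{\mathrm{Ker}\,P})\leq\la_1-2\epsilon$.  Consequently $\|S_G(t)(I-P)\|\leq Me^{(\la_1-\epsilon)t}$, which combined with $S_G(t)P=e^{\la_1 t}P$ yields \eqref{lt10}.  The hardest part is producing the strict inequality $s(G_2)<\la_1$, since the mass balance alone delivers only $\mu\leq 0$; upgrading to the strict gap requires joint use of the sedimentation term $nd_n$ and the fragmentation leakage into $Y$, both controlled through $\theta_n\to\infty$ and the comparison conditions \eqref{condanal}, \eqref{eq2.8}.
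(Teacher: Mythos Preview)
Your construction of $\mb e$ and $\mb e^*$ by recursion, and the block upper-triangular splitting $X=Y\oplus Y^{\perp}$ with $Y=P_{n_0}X$ invariant, are both correct and close in spirit to the paper. The paper, however, does \emph{not} try to bound $s(G_2)$ by a mass-balance estimate; instead it proves $s(G)=\lambda_1$ by a short density argument: since every $P_NX$ is $G$-invariant and $G|_{P_NX}$ is upper triangular with diagonal $(-\theta_1,\ldots,-\theta_N)$, one has $\lambda_1\in\sigma(G)$; if $s:=s(G)>\lambda_1$ then $(sI-G)X\subset X_2$ (the complementary spectral subspace), yet $(sI-G)\bigcup_NP_NX=\bigcup_NP_NX$ is dense, a contradiction. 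Once $s(G)=\lambda_1$, the spectral gap is automatic from compactness of $R(\lambda,G)$ (discrete spectrum), the order $k$ of the pole equals $1$ because $\|e^{-\lambda_1 t}S_G(t)\|\le 1$ forbids polynomial growth, and $m_g=1$ follows from the uniqueness of the adjoint recursion. The membership $\mb e^*\in X^*$ then comes for free as the functional part of the rank-one Riesz projection.

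Your mass-balance route has a genuine gap at the step ``the right-hand side is bounded above by $-(\theta_{n_0}+\delta)\sum_{n>n_0} ng_n$''. For this to hold for an arbitrary positive eigenvector $\mb g$ you need the \emph{termwise} inequality
\[
\frac{r_n}{n}+d_n+\frac{a_n}{n}\sum_{i=1}^{n_0}ib_{i,n}\ \ge\ \theta_{n_0}+\delta,\qquad n>n_0,
\]
and this can fail for finitely many $n$ just above $n_0$: the leakage factor $\sum_{i\le n_0}ib_{i,n}$ may vanish (e.g.\ if an $n$-mer fragments only into pieces of size $>n_0$), while strict minimality of $\theta_{n_0}$ gives only $r_n+d_n+a_n>\theta_{n_0}$, not $r_n/n+d_n>\theta_{n_0}$. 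The conditions \eqref{condanal}, \eqref{eq2.8}, \eqref{condcom} do force $d_n\to\infty$, so the inequality holds for large $n$, but nothing controls the finitely many intermediate indices. Your extension to complex eigenvalues by ``applying the same estimate to the modulus'' is also not valid as stated, since $|G_2\mb g|\neq G_2|\mb g|$; the correct reduction is that $s(G_2)$ is real with a positive eigenvector (Perron--Frobenius for positive semigroups with compact resolvent), so only the real case matters---but that real case is precisely where the estimate breaks down. A clean fix is to observe that $G_2$ has exactly the same upper-triangular structure as $G$ on the shifted index set $\{n>n_0\}$, so the paper's density argument applied to $G_2$ gives $s(G_2)=-\min_{n>n_0}\theta_n<\lambda_1$ directly; at that point, however, you are essentially reproducing the paper's proof.
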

\begin{proof}
We follow the proof of \cite[Theorem 4.3]{BL2012} with some modifications. Under the adopted assumptions,  $R(\la, G)$ is compact, hence $\sigma(G)$ is countable and consists of poles of $R(\la, G)$ of finite algebraic multiplicity, \cite[Corollary V.3.2]{Engel2000}. Thus, in particular, the essential radius of the semigroup and its essential growth rate satisfy, respectively, $r_{ess} (S_G(t)) = 0$ and $\omega_{ess}(G) = - \infty.$ Hence, any half-plane $\{\Re \la > a\;:a>-\infty\}$ contains only a finite number of eigenvalues of $G$.   Since, in addition, \sem{S_G} is positive, it follows that the peripheral spectrum of $G$ is additively cyclic and, being finite, consists of the single point $s(G)$  that is the dominant eigenvalue of $G$, see e.g.   \cite[Theorems 48 \& 49]{BanCIME2008}.

To prove that $\la_1 = s(G)$, first we observe that, by \eqref{condcom}, the infimum of $\theta_n$ is attained.   Let $P_N$ be the projection operator on $X$ defined by
\begin{equation}
P_{N} u = (u_1, u_2, .. ., u_N, 0, . . .)  \quad \text{for fixed} \quad N \in \mathbb{N}.\label{PN}
\end{equation}
The key observation is that the space $P_NX$ is invariant under $G =A+B$ (and thus under \sem{S_G})  for each $N$ -- this follows from the upper triangular structure of $A+B$. Denoting $G_N = G|_{P_NX}$, we have $\sigma(G_N) = \{-\theta_1, \ldots,-\theta_N\}$ and $s(G)\geq  \la_1$ as, due to the invariance, any eigenvalue of $G_N$ is an eigenvalue of $G$. On the other hand, assume $s:=s(G)>  \la_1$. Then  $s$ is an  isolated eigenvalue of $G$ and hence there is a decomposition $X = X_{1}\oplus X_{2},$ where $X_{1}$ is the spectral subspace corresponding to $s,$ whose dimension is at least 1, while $X_{2}$ is a closed complementary subspace invariant under $G$ on which $s I-  G$ is invertible. In particular, $(s I-  G)X \subset X_{2}$. On the other hand, since $s \notin \sigma(G_N)$ for any $N$
$$(s I-  G)X \supset (s I-  G)\bigcup\limits_{N=2}^\infty P_NX  = \bigcup\limits_{N=2}^\infty P_NX
$$
and the latter is dense in $X$. This contradiction shows $s(G)=  \la_{1}$.

By  \cite[Corollary V.3.2]{Engel2000} we can write
\begin{equation}
\label{lt11}
S_G(t) = S_{G}^{1} (t) + R(t),
\end{equation}
where there is $\epsilon > 0$ and  $M\geq 1$ such that
\begin{equation}
\label{lt12}
\| R(t)\| \le M_{\epsilon} e^{(\la_1-\epsilon) t} \quad t \ge 0.
\end{equation}
The operator $S_{G}^{1}$ has finite rank and hence is given by
\begin{equation}
\label{lt13}
S_{G}^{1} (t) =  \bigg ( e^{\lambda_1 t} \sum_{j=0}^{k-1} \frac{t^j}{j!} \bigg ) \Pi,
\end{equation}
where $k$ is the order of the pole $\lambda_1 = s(G)$ and  $\Pi$ is the corresponding spectral projection onto the subspace of $X$ whose dimension equals the algebraic multiplicity $m$ of $\la_1$.  We shall prove that  $k=m=1$.

The semigroup $(S_{G}^{1} (t))_{t\geq 0}$ is  finite-dimensional and it is generated by an operator which has $\lambda_1 = s(G)$ as the only eigenvalue. Since, by the previous part of the proof, $\|e^{\la_1 t} S_G(t)\| \leq 1$, also $\|e^{\la_1t}S_{G}^{1} (t)\|\leq 1$  and thus $k=1$, for otherwise $\| e^{\la_1 t}S_{G}^{1} (t) \| $ would grow polynomially in $t$. Using the inequality $m_g + k - 1 \le m \le m_g  k$, where $m_g$ is the geometric multiplicity, see e.g. \cite[p. 19]{BanCIME2008}, for $k=1$, we find
$m_g  = m$ which shows that to prove $m=1$, it is sufficient to show $m_g =1$.

We prove that $m_g =1$ by examining the adjoint operator $G^{*}$.  Arguing as in  \cite[Theorem 4.3]{BL2012}, we find that the operator $G^{*}$ defined by
\begin{equation}
\label{lt17}
\begin{aligned}
(G^{*} \mb f^{*})_{1} &:= -\theta_1 f_{1}^{*}, \\
(G^{*} \mb f^{*})_{j} &:= -\theta_j f_{j}^{*} + r_{j}f_{j-1}^{*} + a_j \sum_{i=1}^{j-1} b_{i,j} f_{i}^{*}, \quad j=2,3,\ldots,
\end{aligned}
\end{equation}
with the domain
\begin{equation}
\label{lt18}
D(G^{*}) := \bigg \{\mb f^{*} \in X^{*} : \sup_{j \ge 2} \frac{1}{j}\left| -\theta_j f_{j}^{*} + r_{j}f_{j-1}^{*} + a_j \sum_{i=1}^{j-1} b_{i,j} f_{i}^{*} \right| <  \infty \bigg \},
\end{equation}
is the adjoint of $G$.

Suppose that $\mb e^{*} = (e_1^*,e_2^*,\dots)$ is an eigenvector of $G^{*}$ corresponding to the eigenvalue $\lambda_1 = s(G)$ and assume $\la_1 = -\theta_{N_0}$. Then we see that $e_i^* = 0$ for $1\leq i\leq N_0-1$, the $N_0$-th equation is satisfied irrespectively of $e^*_{N_0}$, so $e^*_{N_0}$ can be chosen in an arbitrary way and then $e^*_{N}$ for $N>N_0$ can be recursively evaluated in a unique way (for a given $e^*_{N_0}$) as
\begin{equation}
e^*_N = \frac{1}{\theta_N - \theta_{N_0}} \left(r_N e^*_{N-1} + a_N\sum\limits_{j=N_0}^{N-1} b_{j,N}e^*_j\right).
\label{e*}
\end{equation}
Therefore, the geometric multiplicity of $\lambda_1 = s(G)$ is at most 1. Hence, $\lambda_1 = s(G)$  is a simple dominating eigenvalue of $G$. To complete the proof, let us find the explicit form of the spectral projection $\Pi$. For this we find the eigenvector $\mb e = (e_1,e_2,\ldots)$ of $G$ belonging to $\la_1=-\theta_{N_0}$. We observe that $\mb e$ satisfies
\begin{eqnarray*}
-\theta_{N_0} e_1 &=& -\theta_1 e_1 +r_2 e_2 + \sum_{j= 2}^{\infty} a_{j} b_{i,j} e_{j},\\
\vdots&=& \vdots ,\\
-\theta_{N_0} e_{N_0-1} &=& -\theta_{N_0-1} e_{N_0-1} +r_{N_0} e_{N_0} + \sum_{j= N_0}^{\infty} a_{j} b_{i,j} e_{j},\\
 0&=& r_{N_0+1}e_{N_0+1}  + \sum_{j= N_0+1}^{\infty} a_{j} b_{i,j} e_{j},\\
- \theta_{N_0} e_{N_0+1} &=& -\theta_{N_0+1} e_{N_0+1} +r_{N_0+2} e_{N_0+2} + \sum_{j= N_0+2}^{\infty} a_{j} b_{i,j} e_{j},\\
 \vdots&=&\vdots.
\end{eqnarray*}
We see that the   equations for $N\geq N_0+1$ decouple from the system and are solved by $e_N = 0, N\geq N_0+1.$  Then the $N_0$-th equation is trivially satisfied and the remaining $N_0-1$ equations form an upper triangular system with $N_0$ unknowns that can be solved recursively setting $e_{N_0} =1,$
\begin{equation}\label{eq3.10}
e_N = \frac{1}{\theta_N-\theta_{N_0}}\left(r_{N+1} e_{N+1} +   \sum_{j= N+1}^{N_0} a_{j} b_{i,j} e_{j}\right), \quad 1\leq N \leq N_0-1.
\end{equation}
 Choosing $\mb e^*$ with $e^*_{N_0}=1$, we obtain $\langle \mb e^*,\mb e\rangle =1,$ hence, by using the standard linear algebra formula on $P_NX$  and passing to the limit, we obtain the spectral projection
 $$
\Pi \mb f = \bigg (\sum_{k=1}^{\infty} e_k^{*} f_k \bigg ) \mb e = \langle\mb e^{*}, \mb f\rangle \mb e
$$
and, in view of \eqref{lt13},
\begin{equation}
\label{lt28}
S_{G}^{1} (t)\mb f = e^{\lambda_1 t}\Pi \mb f  = e^{\lambda_1 t} \langle\mb e^{*}, \mb f\rangle \mb e.
\end{equation}
The last formula combined with \eqref{lt12} yields \eqref{lt10}.
\end{proof}

\section{Numerical simulations}
In this section, we provide numerical study of the decay-fragmentation equation \eqref{gf1b}. Note that if the initial condition satisfies $u_n(0) = 0$, $n \ge N+1$, the complete model cannot generate clusters of sizes greater than $N$ and the dynamics is essentially finite dimensional. In view of this, we replace the complete model \eqref{gf1b} with its truncated version
\begin{equation}
\begin{aligned}
\label{gf3a}
&\frac{d u_{i}^N}{d t} = r_{i+1}u_{i+1}^N - r_{i}u_{i}^N - d_{i}u_{i}^N -  a_{i}u_{i}^N + \sum_{j= i+1}^{N} a_{j} b_{i,j} u_{j}^N, \quad i \geq 1, \\
&u_{i}(0) = u_{i}^0, \quad \text{for} \quad i = 1, \ldots, N.
\end{aligned}
\end{equation}
Problem \eqref{gf3a} is just a system of linear ODEs that can be integrated numerically. In our simulations, we employ the \verb"ode23tb" MATLAB solver. The latter is an implementation of TR-BDF2, an implicit Runge-Kutta formula with a trapezoidal rule step in the first stage and backward differentiation formula of order two in the second stage. In all our simulations, we set the time interval to be $[0, 20]$ and let $N=64$.

\begin{figure}[t]
\centering
\includegraphics{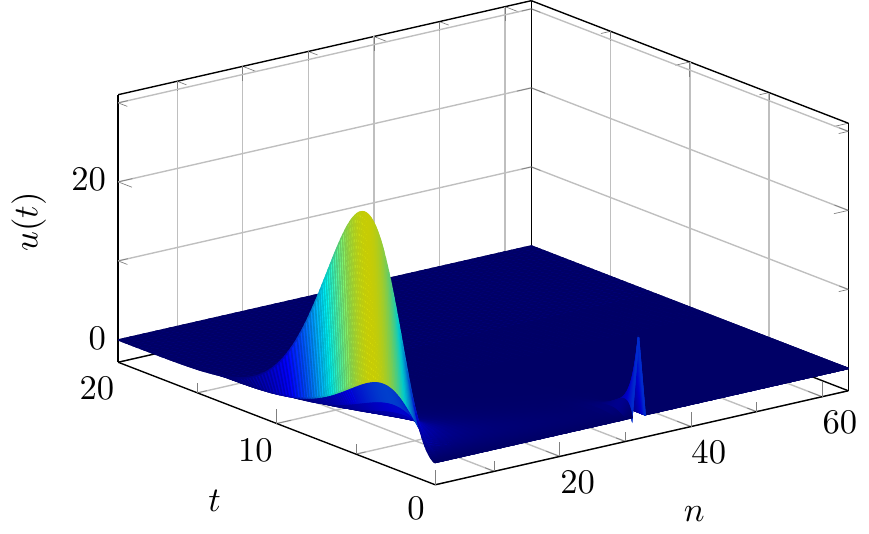}\\[24pt]
\includegraphics{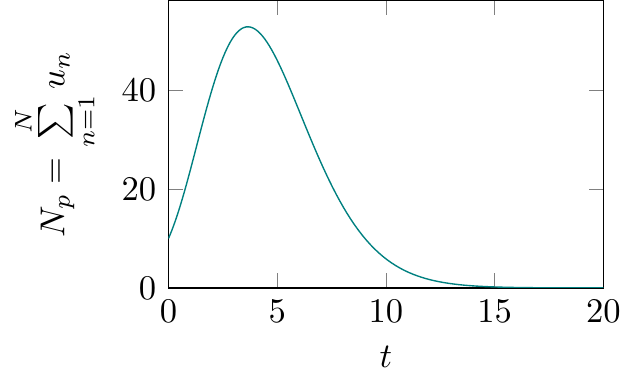} \hspace{3mm}
\includegraphics{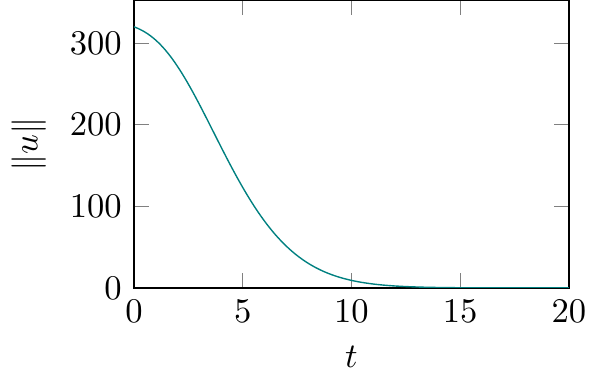}
\caption{Evolution of the decay-fragmentation model \eqref{gf1b} with constant transport and fragmentation rates (top);
evolution of total number of particles (bottom left); evolution of the total mass (bottom right).}\label{fig1}
\end{figure}

\subsection{Constant decay and fragmentation rates}

In our first example, we set $r_i = 1$, $d_i = 0$, $i\ge 1$; $a_1 = 0$ and $a_i=1$, $i\ge 2$;
and $b_{i,j} = \frac{2}{j-1}$, $i\ge 1$, $j\ge i+1$. The scenario falls in the scope of general
Theorem~\ref{df1} and models the decay-fragmentation process with constant decay and fragmentation
rates and no death. As the initial condition, we take the monodisperse distribution
\begin{align*}
u_{n}(0) = 10\delta_{n,32}, \quad 1\le n\le 64,
\end{align*}
where $\delta_{n,m}$ is the Kronecker delta.

\begin{figure}[t]
	\centering
	\includegraphics{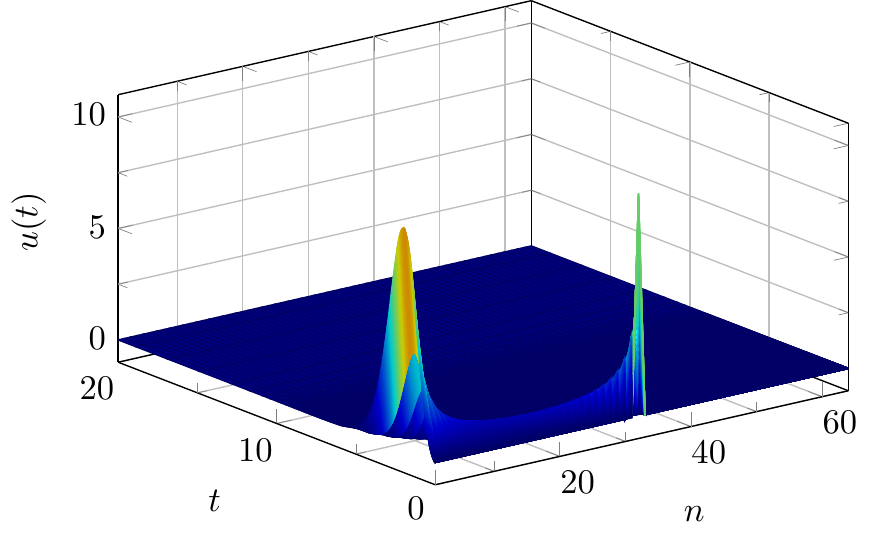}\\[24pt]
	\includegraphics{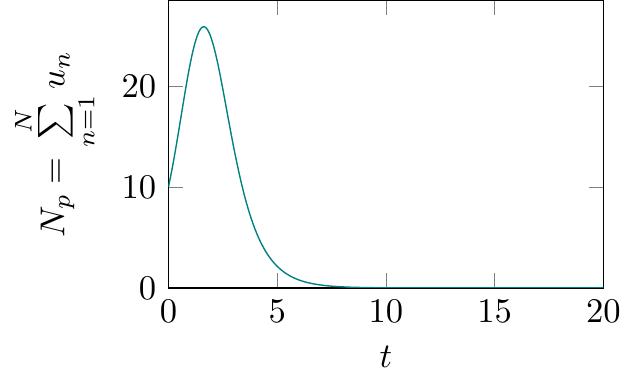} \hspace{3mm}
	\includegraphics{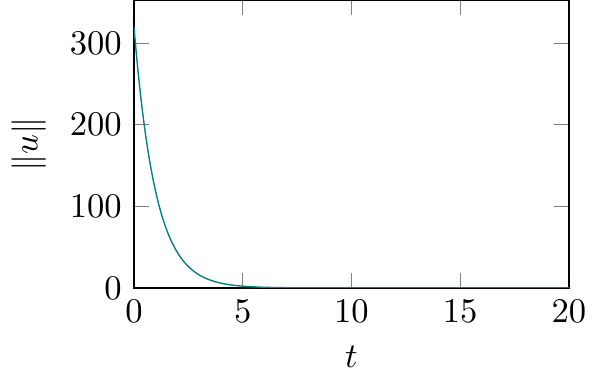}
	\caption{Evolution of the decay-fragmentation model \eqref{gf1b} with linear transport and constant fragmentation rates (top);
		evolution of total number of particles (bottom left); evolution of the total mass (bottom right).}\label{fig2}
\end{figure}

The dynamics of the model is shown in the top diagram of Fig.~\ref{fig1}.
As expected, when time increases no clusters of size greater than $10$ appears. Further, due to the fragmentation process the total number of aggregates of size $10$ is steadily decreasing while smaller clusters appear in the system. Due to the transport process, the total number of particles gradually decays and becomes almost
negligible when $t$ approaches the terminal time $T=20$.

Further illustration is provided by the two bottom diagrams in Fig.~\ref{fig1}. The left-bottom diagram shows the evolution of the total number of particles $N_p = \sum_{n=1}^{N} u_n$. The quantity is increasing initially due to the fragmentation process, and then after some transition time, decreases steadily to zero due to the transport (decay) process.

Note that in the model \eqref{gf1b}, the fragmentation process is conservative and the mass leakage is solely due to the decay. The latter process is monotone, in the sense that the total mass of a system in a pure decay equation shall decrease monotonically (see Theorem~\ref{alt3}). Theoretically, in \eqref{gf1b}, we expect similar mass dynamics as in the pure decay equation. This is confirmed in the right-bottom diagram of Fig.~\ref{fig1}.

\begin{figure}[t]
	\centering
	\includegraphics{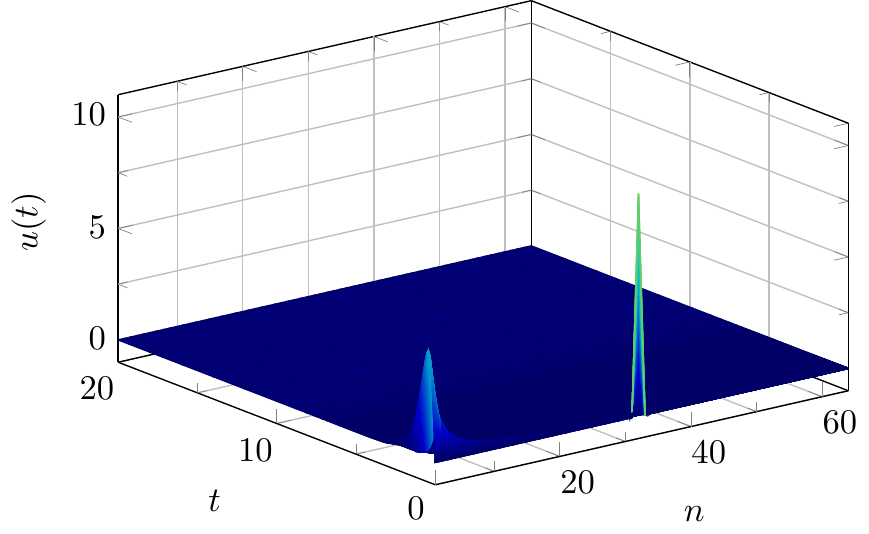}\\[24pt]
	\includegraphics{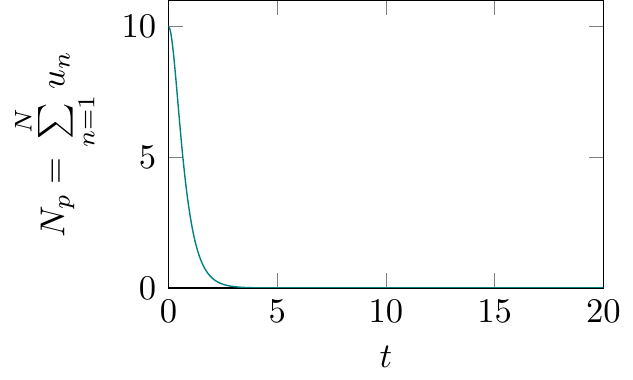} \hspace{3mm}
	\includegraphics{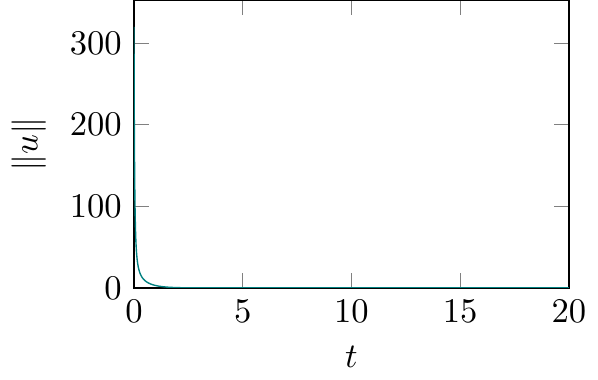}
	\caption{Evolution of the decay-fragmentation model \eqref{gf1b} with constant transport and linear fragmentation and death rates (top);
		evolution of total number of particles (bottom left); evolution of the total mass (bottom right).}\label{fig3}
\end{figure}

\subsection{Linear decay and constant fragmentation rates}\label{sec3.2}

As another illustration to Theorem~\ref{df1}, we let: $r_i = i$, $d_i = 0$, $i\ge 0$; $a_1=0$ and
$a_i=1$; and $b_{i,j} = \frac{2}{j-1}$, $i\ge 1$, $j\ge i+1$.
The dynamics of the model is shown in the top diagram of Figure~\ref{fig2}. The particles break at a constant rate
but decay at a rate faster than in the first example. The total number of particles increases initially (see the
left-bottom diagram in Fig.~\ref{fig2}), but decreases quickly due to the strong decay. The observation is
further confirmed by the right-bottom diagram in Fig.~\ref{fig2}.

\begin{figure}[t]
	\centering
	\includegraphics{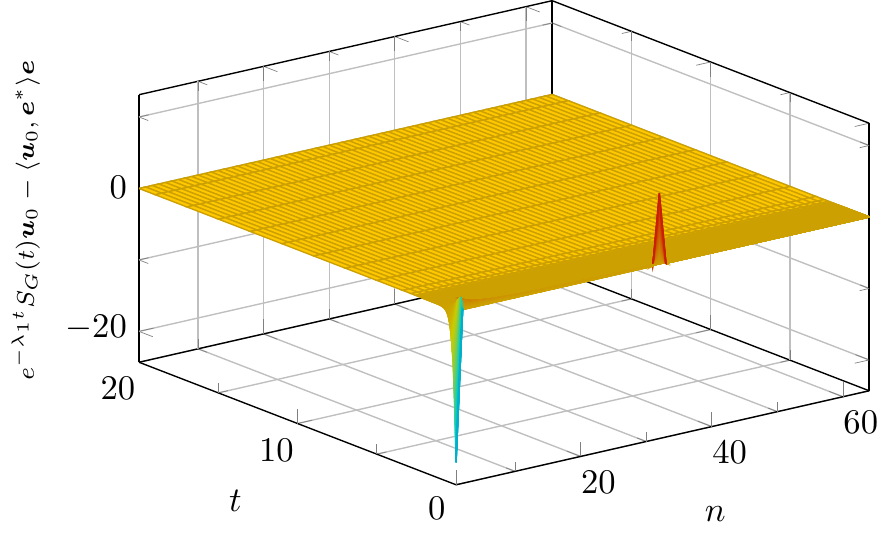}
	\includegraphics{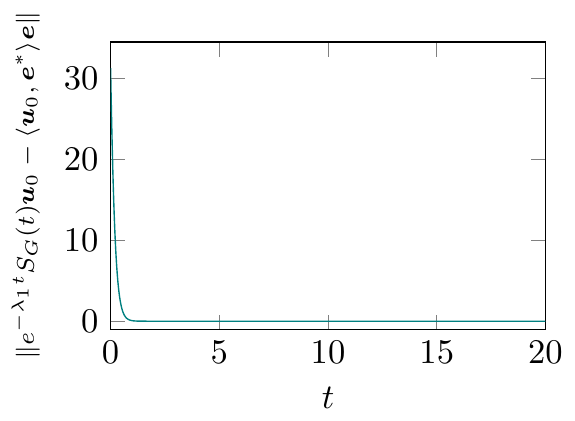}
	\caption{The asymptotic error in the decay-fragmentation model \eqref{gf1b} with constant transport and linear fragmentation and death rates.}\label{fig4}
\end{figure}

\subsection{Constant decay and linear fragmentation and death rates}\label{sec3.3}
We let $r_i = 1$, $d_i = i$, $i\ge 0$; $a_1=0$ and $a_i=i$; and $b_{i,j} = \frac{2}{j-1}$, $i\ge 1$, $j\ge i+1$.
Unlike our previous examples, the strong death rate prevents explosive growth of small clusters near $t=0$.
As time goes the solution decays steadily at a constant rate, see evolution of the total number of particles
and the total mass of the system in the left-bottom and the right-bottom diagrams of Fig.~\ref{fig3}, respectively.

Note that in this example conditions of Theorem~\ref{th3.1} are satisfied. It is easy to verify that
$\lambda_1 = -2$. Hence, we expect the numerical solution to converge to the asymptotic limit
\[
S^1_G(t)\mb u^0 = e^{\lambda_1 t}\langle\mb u_0, \mb e^*\rangle \mb e,
\]
where $\mb e^*$ and $\mb e$ are given respectively by \eqref{e*} and \eqref{eq3.10}, with $N_0 = 1$.
This is indeed the case. In complete agreement with \eqref{lt10}, after a short transition stage the gap between
$e^{-\lambda_1 t}S^1_G(t)\mb u^0$ and the projection $\langle\mb u_0, \mb e^*\rangle \mb e$
decreases exponentially as $t$ increases. The evolution of the gap
$e^{-\lambda_1 t}S^1_G(t)\mb u^0 - \langle\mb u_0, \mb e^*\rangle \mb e$ and its $X$-norm is shown
in Fig.~\ref{fig4}.

\begin{figure}[t]
	\centering
	\includegraphics{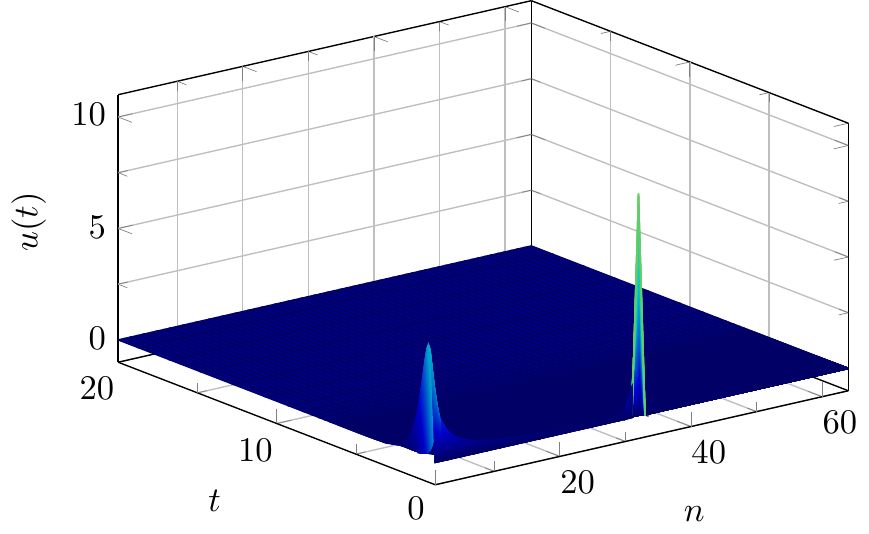}\\[24pt]
	\includegraphics{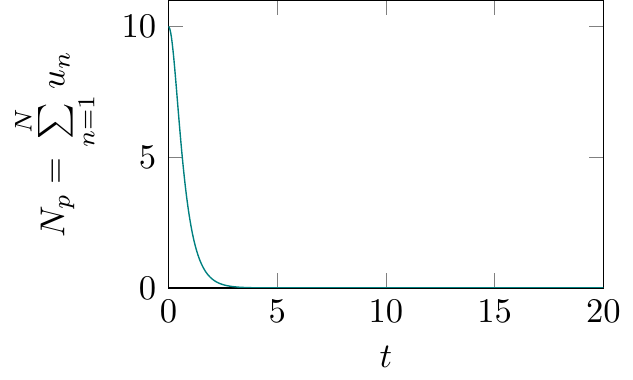} \hspace{3mm}
	\includegraphics{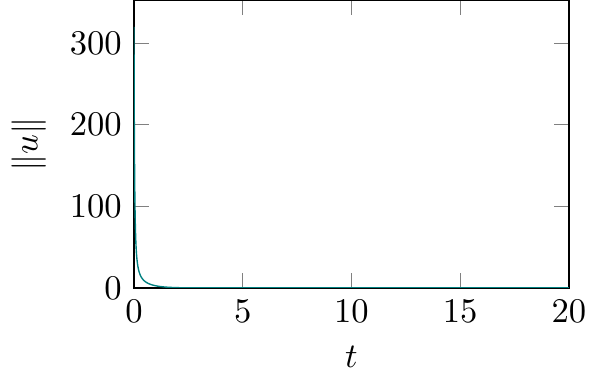}
	\caption{Evolution of the decay-fragmentation model \eqref{gf1b} with linear transport, fragmentation and
	death rates (top);
		evolution of total number of particles (bottom left); evolution of the total mass (bottom right).}\label{fig5}
\end{figure}

\begin{figure}[H]
	\centering
	\includegraphics{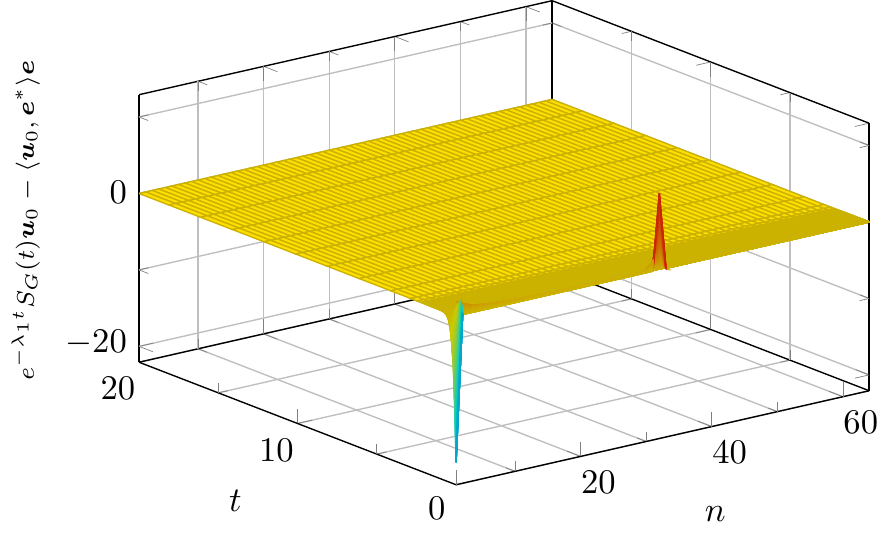}
	\includegraphics{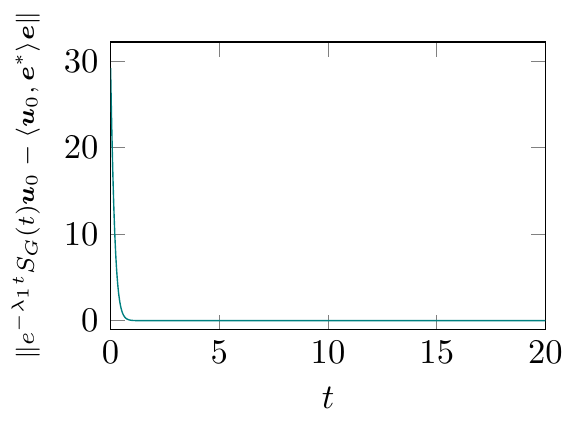}
	\caption{The asymptotic error in the decay-fragmentation model \eqref{gf1b} linear transport,
	fragmentation and death rates.}\label{fig6}
\end{figure}

\subsection{Linear decay, fragmentation and death rates}\label{sec3.4}
In our last example, we model a scenario that incorporates strong fragmentation and transport processes,
i.e. we assume $r_i = i$, $d_i = i$, $i\ge 0$; $a_1=0$ and $a_i=i$; and
$b_{i,j} = \frac{2}{j-1}$, $i\ge 1$, $j\ge i+1$. The qualitative behavior of
the numerical solution shares the dynamical features discussed in Section~\ref{sec3.3}
(see the top diagram of Fig.~\ref{fig5}). As in Sections~\ref{sec3.3}, strong death process prevents explosive
growth of small clusters near $t=0$, while strong transport yields rapid decay of the total number of particles
and the total mass of the system, see the left- and the right-bottom diagram of Fig.~\ref{fig5}, respectively.
Further, the conditions of Theorem~\ref{th3.1} are satisfied.
As a consequence, the gap $e^{-\lambda_1 t}S^1_G(t)\mb u^0 - \langle\mb u_0, \mb e^*\rangle \mb e$
demonstrates the same qualitative behaviour as in Section~\ref{sec3.3}, see Fig.~\ref{fig6}.

\bibliographystyle{apalike}
\bibliography{decayRef}

\end{document}